  \newtheorem{theorem}{Theorem}[subsection]
  \newtheorem{proposition}[theorem]{Proposition}
  \newtheorem{lemma}[theorem]{Lemma}
  \newtheorem{corollary}[theorem]{Corollary}
   \newtheorem{definition}[theorem]{Definition}
  \newtheorem{thm2}{Theorem}[section]
  \newtheorem*{theorem*}{Theorem}
\theoremstyle{remark} 
  \newtheorem{remark}[theorem]{Remark}
  \newtheorem{example}[theorem]{Example}
  \newtheorem{rem2}[thm2]{Remark}
\theoremstyle{definition}
\newcommand {\Hom} {\operatorname {Hom}}
\renewcommand {\geq} {\geqslant}
\newcommand {\tensor}{\otimes}
\newcommand{\Spec}[1]{\mathrm{Spec}\ \!#1}
\newcommand{\ZZ}{\mathbb{Z}}
\newcommand{\betterhookarrow}{\!\xymatrix{{}\ar@{^{(}->}[r]&{}}\! }
\renewcommand{\labelitemi}{{\bf (\textdagger)}}
\def\cS{\mathcal{S}}
\def\Sc{\mathcal{S}}
\def\Xc{\mathfrak{X}}
\def\QQ{\mathbb{Q}}
\def\RR{\mathbb{R}}
\def\ZZ{\mathbb{Z}}
\def\<{\langle}
\def\>{\rangle}
\DeclareMathOperator\init{in}
\DeclareMathOperator\Spa{Spa}
\DeclareMathOperator\Spf{Spf}
\DeclareMathOperator\trop{Trop}
\DeclareMathOperator\val{val}
\def\1{{\bf 1}}
\def\lra{\longrightarrow}
\newcommand{\an}[1]{#1^{\mathrm{an}}}
\newcommand{\ad}[1]{#1^{\mathrm{ad}}}
\newcommand{\Ooad}[1]{\mathscr{O}_{\!\ad{#1}}^{\circ}}
\newcommand{\Trop}[2]{\mathrm{Trop}(#1,#2)}
\newcommand{\fTrop}[2]{\mathfrak{Trop}(#1,#2)}
\newextarrow{\xbigtoto}{{20}{20}{20}{20}}
   {\bigRelbar\bigRelbar{\bigtwoarrowsleft\rightarrow\rightarrow}}
\DeclareSymbolFont{largesym}{OML}{cmm}{m}{it}
\DeclareMathSymbol{\nstnsmall}{0}{largesym}{"22}
\def\@tocline#1#2#3#4#5#6#7{\relax
  \ifnum #1>\c@tocdepth 
  \else
    \par \addpenalty\@secpenalty\addvspace{#2}%
    \begingroup \hyphenpenalty\@M
    \@ifempty{#4}{%
      \@tempdima\csname r@tocindent\number#1\endcsname\relax
    }{%
      \@tempdima#4\relax
    }%
    \parindent\z@ \leftskip#3\relax \advance\leftskip\@tempdima\relax
    \rightskip\@pnumwidth plus4em \parfillskip-\@pnumwidth
    #5\leavevmode\hskip-\@tempdima
      \ifcase #1
       \or\or \hskip 1em \or \hskip 2em \else \hskip 3em \fi%
      #6\nobreak\relax
    \dotfill\hbox to\@pnumwidth{\@tocpagenum{#7}}\par
    \nobreak
    \endgroup
  \fi}
\title{Adic tropicalizations and cofinality of Gubler models}
\author{Tyler Foster}
\author{Sam Payne}
\address{ Department of Mathematics,
University of Texas at Austin, 
Austin, TX 78712}
\email{\href{mailto:sampayne@utexas.edu}{sampayne@utexas.edu}}
\begin{document}

\begin{abstract}
\vskip -.25cm
We introduce adic tropicalizations for subschemes of toric varieties as limits of Gubler models associated to polyhedral covers of the ordinary tropicalization.  Our main result shows that Huber's adic analytification of a subscheme of a toric variety is naturally isomorphic to the inverse limit of its adic tropicalizations, in the category of locally topologically ringed spaces.   The key new technical idea underlying this theorem is cofinality of Gubler models, which we prove for projective schemes and also for more general compact analytic domains in closed subschemes of toric varieties.  In addition, we introduce a $G$-topology and structure sheaf on ordinary tropicalizations, and show that Berkovich analytifications are limits of ordinary tropicalizations in the category of topologically ringed topoi.
\end{abstract}

\maketitle



\section{Introduction}\label{section: introduction}

Let $X$ be a separated scheme of finite type over a complete and algebraically closed, nontrivially valued field $K$.  In previous work, we related the underlying topological space of the nonarchimedean analytification $\an{X}$, in the sense of Berkovich \cite{Berkovich90}, to limits of tropicalizations for inverse systems of toric embeddings \cite{analytification, limits}.  Here, we deepen those results by studying analogous limits in the category of locally topologically ringed spaces, establishing connections to adic analytifications in the sense of Huber, and accounting for the structure sheaf on $\an{X}$.  

Our starting point is the well-known fact that tropical geometry provides a systematic method for producing formal models of $X$ from a toric embedding $\iota \colon X \hookrightarrow Y_\Sigma$ plus combinatorial data.  Roughly speaking, a polyhedral complex $\Delta$ that covers the ordinary tropicalization $\Trop{X}{\iota}$ gives rise to a formal model $\mathfrak{X}_\Delta$, which we refer to as a \emph{Gubler model}.  Furthermore, any refinement $\Delta'$ of $\Delta$ gives rise to a natural map of Gubler models $\mathfrak{X}_{\Delta'} \rightarrow \mathfrak{X}_{\Delta}$.  See \S\ref{Section-adic trop} for details of these constructions.
We define the \emph{adic tropicalization} to be the locally ringed space
\[
\fTrop{X}{\iota} = \varprojlim_\Delta \mathfrak{X}_\Delta,
\]
where the limit is taken over all morphisms of Gubler models induced by polyhedral refinements.

\medskip

We focus on inverse systems of toric embeddings $\Sc$ of $X$ that satisfy the following condition.

\vskip .2cm

\renewcommand{\labelitemi}{$(\dagger)$}
\begin{itemize}
\item There is an affine open cover $X = U_1 \cup \cdots \cup U_r$ such that, for any finite sets of regular functions $R_1 \subset K[U_1], \ldots, R_r \subset K[U_r]$, there is an embedding $\iota \in \Sc$ such that $U_j$ is the preimage of a torus invariant affine open set $U_{\sigma_j}$ and each element of $R_j$ is the pullback of a character that is regular on $U_{\sigma_j}$, for each $1 \leq j \leq r$.
\end{itemize}

\vskip .2cm

\noindent This condition is stronger than the condition $(\star)$ appearing in \cite{limits}.  While $(\star)$ is sufficient to recover the underlying topological space of $\an{X}$ as a limit of tropicalizations, a stronger condition seems necessary for recovering finer information, such as structure sheaves.   We emphasize that the inverse system of all toric embeddings of $X$ does satisfy ($\dagger$), with respect to any affine open cover, provided that $X$ admits a single toric embedding \cite[Theorem~4.2]{limits}. 

Given an inverse system of toric embeddings $\mathcal{S}$ satisfying ($\dagger$), our first result expresses the adic analytification $\ad{X}$ as a limit of adic tropicalizations in the category of locally topologically ringed spaces. (See \S\ref{sec:prelim} for background on Huber's adic spaces, including the construction of $\ad{X}$.)

\begin{thm2} \label{thm:adiclimit}
Let $\mathcal S$ be an inverse system of toric embeddings that satisfies \emph{($\dagger$)}. Then the induced map
\[
(\ad{X}, \Ooad{X}) \lra \varprojlim_{\mathcal S} \big(\ \!\fTrop{X}{\iota},\ \!\mathscr{O}\ \!\big)
\]
is an isomorphism of locally topologically ringed spaces.
\end{thm2}

\noindent This theorem is one way of making precise the idea that tropical geometry produces many formal models of algebraic varieties.  
When $X$ is projective, we prove that every formal model is dominated by a Gubler model; see Theorem~\ref{theorem: 2}.  

Tropical geometry also gives natural methods for constructing formal models of analytic subdomains in $\an{X}$.   Indeed, if $\mathfrak{X}_\Delta$ is a Gubler model, then each subcomplex $\Delta' \subset \Delta$ naturally gives rise to a formal open subscheme $\mathfrak{X}_{\Delta'} \subset \mathfrak{X}_{\Delta}$.  Then $\mathfrak{X}_{\Delta'}$ is an admissible formal model of the analytic subdomain $\mathrm{Trop}^{-1}(|\Delta'|) \subset \an{X}$, which is compact if $\Delta'$ is finite.

\begin{thm2}\label{theorem: cofinality of Gubler models}
	Let $\Sc$ be an inverse system of toric embeddings of $X$ that satisfies $(\dagger)$, and let $V \subset \an{X}$ be a compact analytic domain.  Then, for any formal model $\mathfrak{V}$ of $V$, there is a Gubler model $\mathfrak{X}_\Delta$ associated to some $\iota \in \mathcal{S}$, and a finite subcomplex $\Delta' \subset \Delta$ such that $\mathfrak{X}_{\Delta'}$ is an admissible formal model of $V$ that dominates $\mathfrak{V}$.
\end{thm2}

\noindent This does not prove existence of Gubler models with desirable properties such as mild singularities; even if $X$ is smooth and projective and $\mathfrak{V}$ is semistable, one does not have control over the singularities of the Gubler models $\mathfrak{X}_{\Delta'}$ that dominate $\mathfrak{V}$.

\bigskip

Arguments similar to those used in the proof of Theorem~\ref{thm:adiclimit} also give a natural way of realizing the Berkovich analytification $\an{X}$ as a limit of tropicalizations in a way that accounts for the structure sheaf.  Recall that the structure sheaf on $\an{X}$ is defined with respect to the $G$-topology (a Grothendieck topology in which the ``admissible opens" are subsets, fiber products are intersections, and admissible covers are a distinguished class of set-theoretic covers by admissible subsets), given by analytic domains and admissible covers.

	In \S\ref{section: The limit theorem for structure sheaves on Berkovich spaces}, we define an analogous $G$-topology on the ordinary tropicalization $\Trop{X}{\iota}$, 
	along with a structure sheaf $\mathscr{O}^{\text{trop}}$ of topological $R$-algebras in this $G$-topology, such that the tropicalization map $\trop \colon \an{X} \lra \big(\Trop{X}{\iota}, K_{\!} \otimes_{R\!} \mathscr{O}^{\text{trop}} \big)$ is a morphism of topologically $G$-ringed spaces, as are the projections between tropicalizations associated to morphisms of toric embeddings.  

\begin{thm2}\label{thm:berklimit}
Let $\mathcal S$ be an inverse system of toric embeddings that satisfies \emph{($\dagger$)}. Then the natural map
\[
\an{X}\ \lra\ \varprojlim_{\mathcal{S}} \big(\ \!\mathrm{Trop}(X, \iota)_G,\ \!K_{\!} \otimes_{R\!}  \mathscr{O}^{\mathrm{trop}} \ \!\big)
\]
induces an equivalence of locally ringed topoi.
\end{thm2}

\begin{rem2}
	If the residue field of $K$ is $\mathbb{C}$, and if $\iota \colon X\hookrightarrow Y_{\Sigma}$ is a closed embedding into a toric variety such that each initial degeneration $\init_{v}(X)$ is smooth, then the disjoint union of the initial degenerations $\init_{v}(X)(\mathbb{C})$, for $v \in \Trop{X}{\iota}$, is naturally identified with an object in Parker's category of exploded manifolds \cite{Parker12}. Thus our construction of the adic tropicalization $\fTrop{X}{\iota}$ provides both a Zariski topology and a sheaf of topological rings on any exploded manifold arising in this way. In situations where some of the initial degenerations $\init_{v}(X)$ are not smooth, or the residue field is not $\mathbb{C}$, our construction provides an algebro-geometric analogue of Parker's exploded manifolds, allowing singularities and non-reduced structures.  When $\dim X = 1$, adic tropicalization is closely related to the metrized curve complexes of Amini and Baker \cite{AminiBaker15}.  For additional remarks on the relationship between adic tropicalization and exploded manifolds, and a thorough discussion of the relation between adic tropicalization and metrized curve complexes, see \cite{Foster16}.
	\end{rem2}

\begin{rem2}
See also \cite{FosterRanganathan16b, FosterRanganathan16} for relations between adic analytifications and limits of tropicalizations over higher rank valued fields.
\end{rem2}

\noindent \textbf{Acknowledgments.}  We thank M. Baker, F. Baldassarri, B. Conrad, N. Friedenberg, W. Gubler, J. Rabinoff, and D. Ranganathan for helpful conversations related to this project. TF conducted research on this paper while a visiting researcher at L'Institut des Hautes \'Etudes Scientifiques, at L'Institut Henri Poincar\'e, and at the Max Planck Institute for Mathematics.  He was partially supported by NSF RTG grant DMS-0943832 and by Le Laboratoire d'Excellence CARMIN.  SP conducted research on this paper while visiting MSRI.  He was partially supported by NSF grants DMS-2001502 and DMS-2053261 and a Simons Fellowship.

\section{Preliminaries}  \label{sec:prelim}

Throughout, we fix an algebraically closed\footnote{
The assumption that $K$ is algebraically closed is used in the following ways.  First, in the introduction, we mention that the system of all toric embeddings of any closed subscheme of a toric variety satisfies ($\dagger$), referring to \cite[Theorem~4.2]{limits}.  The statement of that theorem should have included the hypothesis that $K$ be algebraically closed; it is required in the proof in order to apply the embedding algorithm from \cite{Wlodarczyk93}.  Next, $K$ being algebraically closed guarantees that the value group $\Gamma$ is divisible, which simplifies the discussion of $(\Gamma,\Sigma)$-admissible polyhedra in \S\ref{sec:admissible}.  If $\Gamma$ is neither discrete nor divisible, then these should also be required to have vertices in $N_\Gamma$.  Finally, in the proofs of our main results, we use the existence of locally finite completions of $(\Gamma,\Sigma)$-admissible complexes, as constructed in  \cite{ColesFriedenberg23}.  This construction produces complexes with vertices in the divisible hull of $\Gamma$, and we do not know how to overcome the resulting difficulties when $\Gamma$ is neither discrete nor divisible.} nonarchimedean field $K$, which is complete with respect to a nontrivial valuation $$\val \colon K\longrightarrow\mathbb{R}\sqcup\{\infty\}.$$ 
Let $R$ be the ring of integers in $K$, with maximal ideal $\mathfrak{m}$ and residue field $k=R/\mathfrak{m}$. Fix a real number $0<\varepsilon<1$, and let
$|-| \colon K\lra\mathbb{R}_{\geq0}$
denote the induced norm, given by $$|a|=\varepsilon^{\mathrm{val}(a)}.$$

In this section, we briefly recall the basic notions that we need from the theory of $K$-analytic spaces, in the sense of Berkovich \cite{Berkovich90}, admissible formal models, in the sense of Raynaud \cite{Bosch14}, and adic spaces in the sense of Huber \cite{Huber94}.  For an expanded expository presentation, motivated by the results of this paper and including further references, see also \cite[\S2]{Foster16}.

\subsection{Berkovich spectra}

Let $A \cong K \langle T_1, \ldots, T_n \rangle / \mathfrak a$ be a strictly affinoid algebra.  The Berkovich spectrum $\mathscr{M}(A)$ is the set of continuous seminorms $|-| \colon  A \rightarrow \RR_{\geq 0}$ that extend the given norm on $K$, equipped with the subspace topology for the inclusion $\mathscr{M}(A) \subset A^{\RR_{\geq 0}}$.  The Berkovich spectrum $\mathscr{M}(A)$ also carries a natural $G$-topology, which refines the ordinary topology, and can be described as follows.

Given elements $f_1, \ldots, f_m, g$ of $A$ with no common zero, let
\[
B = A\big\langle X_{1},\dots,X_{m}\big\rangle\big/(gX_{1}-f_{1},\ \!\dots,\ \!gX_{m}-f_{m})
\]
be the corresponding \emph{rational algebra}.  The presentation $A \big \langle X_1, \ldots, X_m \big \rangle \xtwoheadrightarrow{} B$ induces an inclusion $\mathscr{M}(B) \subset \mathscr{M}(A)$, and the subsets that occur in this way are called \emph{rational subdomains}.
A subset $V \subset \mathscr{M}(A)$ is \emph{admissible} in the $G$-topology if every point $x \in V$ has a neighborhood of the form $V_1 \cup \cdots \cup V_n$, where each $V_i$ is a rational domain, and $x \in V_1 \cap \cdots \cap V_n$.  Similarly, the \emph{admissible covers} in the $G$-topology are covers of admissible subsets by admissible subsets $V = \bigcup_{i \in I} V_{i}$ such that every point $x \in V$ has a neighborhood which is a finite union of sets in the cover $V_{i_1} \cup \cdots \cup V_{i_n}$, with $x \in V_{i_1} \cap \cdots \cap V_{i_n}.$  The structure sheaf on $\mathscr{M}(A)$ is the unique sheaf of topological rings in this $G$-topology whose value on a rational domain $\mathscr{M}(B)$ is the rational algebra $B$, and whose restriction maps between these rational algebras are the natural ones.  See \cite{Berkovich90, Berkovich93} for further details.


\subsection{Adic spectra} Given a strictly affinoid algebra $A \cong K \langle T_1, \ldots, T_n \rangle / \mathfrak a$, let $A^\circ \subset A$ be the subring of power bounded elements.  The {\em adic spectrum} of the pair $(A,A^{\circ})$, denoted $\Spa(A,A^{\circ})$, is the set of all equivalence classes of continuous seminorms $\|-\|_{x} \colon A\lra\{0\}\sqcup\Gamma$ such that $\|a\|_{x}\leq1$ for all $a\in A^{\circ}$.\footnote{The adic spectrum is defined similarly for pairs $(A, A^+)$, where $A^+$ is a subring of the power bounded elements \cite[\S1.1]{Huber96}, but we will only need the case where $A^+ = A^\circ$.}  Here $\Gamma$ denotes any totally ordered abelian group, written multiplicatively, and $\{0\}\sqcup\Gamma$ is the totally ordered abelian semigroup in which $0\cdot\gamma=0$ and $0<\gamma$ for all $\gamma\in\Gamma$, endowed with its order topology.

The equivalence relation is the smallest such that continuous seminorms
	$$
	\|-\|_{x} \colon A\lra\{0\}\sqcup\Gamma
	\ \ \ \ \ \ \ \mathrm{and}\ \ \ \ \ \ \ 
	\|-\|_{x'} \colon A\lra\{0\}\sqcup\Gamma',
	$$
are equivalent whenever there is an inclusion $\alpha \colon \{0\}\sqcup\Gamma \hookrightarrow \{0\}\sqcup\Gamma'$ of ordered abelian semigroups such that $\alpha{}_{{}^{\ \!\circ}}\|-\|_{x}=\|-\|_{x'}$.
	
If $B = A\big\langle X_{1},\dots,X_{m}\big\rangle\big/(gX_{1}-f_{1},\ \!\dots,\ \!gX_{m}-f_{m})$ is a rational algebra, with the topology induced by the quotient norm, then the presentation $A\langle X_1, \ldots X_m \rangle \xtwoheadrightarrow{} B$ induces an inclusion $\Spa(B, B^\circ) \subset \Spa(A, A^\circ)$.  The subsets that occur in this way are called \emph{rational subsets}, and rational subsets generate the topology on $\Spa(A, A^\circ)$.  

The structure sheaf $\mathscr{O}$ is the unique sheaf that takes the value $B$ on a rational subset $\Spa(B, B^\circ)$, and whose restriction maps between such rational algebras are the natural ones. The stalk of this structure sheaf $\mathscr O$ at any point $x \in \Spa(A, A^\circ)$ is a topological local ring, and the seminorm $\|-\|_{x}$ induces a continuous seminorm on $\mathscr{O}_{x}$.   We also consider the subsheaf $\mathscr{O}^\circ$ of power bounded functions, whose value on an open subset $U$ is the ring of sections $f\in\mathscr{O}_{X}(U)$ for which $\|f\|_{x}\leq1$ at every point $x\in U$.  If $U = \Spa(B, B^\circ)$ is a rational subset, then $\mathscr{O}^\circ(U) = B^\circ$.

\subsection{Analytic spaces}  \label{ss:Kanalytic}

A \emph{strictly $K$-analytic space} is a Hausdorff topological space $X$ with a net of compact subspaces $V_i \subset X$, each equipped with a homeomorphism to a strictly $K$-affinoid space, and, for each inclusion $V_j \subset V_i$, a morphism of $K$-affinoid spaces identifying $V_j$ with an affinoid domain in $V_i$. We then consider $X$ as a locally topologically $G$-ringed space, with the induced $G$-topology. 

An {\em adic space} over $K$ is a locally topologically ringed space $(Z,\mathscr{O}_{Z})$ with an atlas $\{V_{i}\betterhookarrow Z\}$, whose charts are adic spectra of strictly affinoid $K$-algebras. 

There is a natural functor from $K$-analytic spaces to adic spaces, defined as follows.  Suppose $V$ is a $K$-analytic space, with an atlas of Berkovich spectra $V_i \cong \mathscr{M}(A_i)$, glued along inclusions induced by morphisms $f_{ij }  \colon  A_{i} \rightarrow A_{j}$ for $V_j \subset V_i$.  Then the associated adic space $\ad{V}$ has an atlas of adic spectra $\Spa(A_i, A^\circ_i)$, glued along the inclusions induced by the $\{f_{ij} \}$.  If $X$ is a separated scheme of finite type over $K$, we write $\ad{X}$ for the \emph{adic analytification} of $X$, i.e., the adic space associated to the Berkovich analytification $\an{X}$.

\subsection{Admissible formal models}  Many of the relations between adic spaces and $K$-analytic spaces, including those arising through analytification and tropicalization of algebraic schemes, are best understood in terms of Raynaud's theory of admissible formal models.  For an accessible presentation of this theory, see \cite{Bosch14}.

A topological $R$-algebra $A$ is {\em admissible} if there is an isomorphism of topological rings
$$
A
\ \ \cong\ \ 
R\langle t_{1},\dots,t_{n}\rangle\big/\mathfrak{a},
$$
where $R\langle t_{1},\dots,t_{n}\rangle\big/\mathfrak{a}$ has its $\mathfrak{m}$-adic topology, such that:
\vskip .2cm
\begin{itemize}
\item[{\bf (i)}]
the ideal $\mathfrak{a}\subset R\langle t_{1},\dots,t_{n}\rangle$ is finitely generated;
\item[{\bf (ii)}]
\vskip .2cm
the ring $R\langle t_{1},\dots,t_{n}\rangle\big/\mathfrak{a}$ is free of $\mathfrak{m}$-torsion.
\end{itemize}
\vskip .2cm
An {\em admissible formal $R$-scheme} is a formal scheme over $\mathrm{Spf}_{\ \!\!}R$ that admits an open covering
by formal spectra $\mathfrak{U}_{i}=\mathrm{Spf}A_{i}$, such that each $A_{i}$ is an admissible $R$-algebra.  Throughout, we assume that all admissible formal schemes are separated and paracompact.

Let $\mathfrak{X}$ be an admissible formal $R$-scheme, and let $\big\{\mathfrak{U}_{i} \hookrightarrow\mathfrak{X}\big\}$ be an open cover of $\mathfrak{X}$ by formal spectra of admissible $R$-algebras $A_{i}$. Each topological algebra $K\tensor_{R}A_{i}$ is strictly $K$-affinoid \cite[\S7.4]{Bosch14}. Moreover, paracompactness of $\mathfrak{X}$ ensures that the Berkovich spectra $\mathscr{M}(K\tensor_{R}A_{i})$ of these strictly $K$-affinoid algebras glue to produce a $K$-analytic space over $K$ \cite[Proposition~1.3.3(b)]{Berkovich93}. We denote this $K$-analytic space $\an{\mathfrak{X}}$; it is the {\em Raynaud fiber} of $\mathfrak{X}$.

\begin{definition}\label{admissible model of Berkovich space}
{\bf (Admissible formal models of an analytic space).}
Let $V$ be any $K$-analytic space over $K$. An {\em admissible formal model of} $V$ is an admissible formal $R$-scheme $\mathfrak{V}$ together with an isomorphism of $K$-analytic spaces $\an{\mathfrak{V}}\xrightarrow{\ \sim\ }V$.
\end{definition}

\noindent A {\em morphism} of admissible formal models of $V$ is a morphism $\mathfrak{V}_{1}\lra\mathfrak{V}_{2}$ whose induced morphism $\mathfrak{V}_{1}^{\mathrm{an}}\lra\mathfrak{V}_{2}^{\mathrm{an}}$ of $K$-analytic spaces commutes with the isomorphisms $\mathfrak{V}_{1}^{\mathrm{an}}\xrightarrow{\ \sim\ }V$ and $\mathfrak{V}_{2}^{\mathrm{an}}\xrightarrow{\ \sim\ }V$.

\subsection{Quasi-compact adic spaces are limits of formal models}\label{specializations of adic spaces}

Let $\mathfrak{X}$ be an admissible formal $R$-scheme.  We write $\ad{\mathfrak{X}}$ for the adic space associated to $\an{\mathfrak{X}}$, and refer to this as the \emph{adic Raynaud fiber} of $\mathfrak{X}$.  It comes with a {\em specialization morphism}
\begin{equation*}
\mathrm{sp}_{\mathfrak{X}} \colon (\ad{\mathfrak{X}},\mathscr{O}^{\ \!\circ}_{\!\ad{\mathfrak{X}}})\lra(\mathfrak{X},\mathscr{O}_{\mathfrak{X}})
\end{equation*}
of locally topologically ringed spaces over $\mathrm{Spec}_{\ \!}K$.  Here, $\mathscr{O}^{\ \!\circ}_{\!\ad{\mathfrak{X}}} \subset \mathscr{O}_{\!\ad{\mathfrak{X}}}$ denotes the subsheaf of power bounded analytic functions on $\ad{\mathfrak{X}}$. 

When $\ad{\mathfrak{X}}$ is quasi-compact, these specialization morphisms induce a natural isomorphism
\[
(\ad{\mathfrak{X}}, \Ooad{X}) \xrightarrow{\sim} \varprojlim \mathfrak{X}'
\]
in the category of locally topologically ringed spaces, where the inverse limit is taken over the category of formal models $\mathfrak{X}'$ of $\an{\mathfrak{X}}$ \cite[Theorem~2.22]{Scholze12}.

\section{Adic tropicalization}\label{Section-adic trop}

We now define the adic tropicalization of a subscheme of a toric variety.  Its underlying set is the disjoint union of all initial degenerations.  This is equipped with the structure of a locally topologically ringed space, via an  identification with the inverse limit of all formal models associated to admissible polyhedral covers of the ordinary extended tropicalization.

\subsection{Tropicalization}\label{subsection: tropicalization}

We briefly recall the basic notion of the ordinary \emph{extended tropicalization}, hereafter referred to as \emph{tropicalization}, for subschemes of toric varieties.  See \cite[\S3]{analytification} for further details.  Let $Y_\Sigma$ be the toric variety over $K$ associated to a fan $\Sigma$ in $N_\RR$.  Each cone $\sigma \in \Sigma$ corresponds to an affine torus-invariant open subvariety $U_\sigma \subset Y_\Sigma$, whose coordinate ring is the semigroup ring $K[S_\sigma]$ generated over $K$ by the semigroup $S_\sigma$ of characters of the dense torus that extend to regular functions on $U_\sigma$.  We equip $\RR \sqcup \{\infty\}$ with the topology that makes the exponential map $\RR \sqcup \{\infty\}\lra\RR_{\ge0}$, given by $x \mapsto e^{-x}$, a homeomorphism. The tropicalization of $U_\sigma$ is the space of semigroup homomorphisms
\[
\trop (U_\sigma) = \Hom(S_\sigma, \RR \sqcup \{\infty\}),
\]
with the topology induced by that of $\RR \sqcup \{\infty\}$.

Just as $U_\sigma$ decomposes as a disjoint union of torus orbits $O_\tau$ corresponding to the faces $\tau \preceq \sigma$, the tropicalization $\text{Trop}(U_{\sigma})$ decomposes as a disjoint union of real vector spaces
\[
\text{Trop}(U_{\sigma}) = \bigsqcup_{\tau \preceq \sigma} N_\RR / \mathrm{span}(\tau).
\]
Here each $N_\RR/\text{span}(\tau)$ is canonically identified with the tropicalization $\trop(O_\tau)$ of the open torus orbit $O_\tau\subset U_\sigma$ corresponding to $\tau$. Each inclusion of faces $\tau \preceq \sigma$ induces open immersions $U_\tau \subset U_\sigma$ and $\trop(U_\tau) \subset \text{Trop}(U_{\sigma})$. The tropicalization $\trop(Y_\Sigma)$ is obtained by gluing $\{ \text{Trop}(U_{\sigma}) \}_{\sigma \in \Sigma}$ along the open immersions $\trop(U_\tau) \subset \text{Trop}(U_{\sigma})$, just as the toric variety $Y_\Sigma$ is obtained by gluing $\{ U_\sigma \}_{\sigma \in \Sigma}$ along the open immersions $U_\tau \subset U_\sigma$. The natural tropicalization maps on affine open toric subvarieties glue to give a proper continuous surjection
	\begin{equation*}
	\trop \colon  \an{Y_\Sigma} \xtwoheadrightarrow{} \trop(Y_\Sigma).
	\end{equation*}

Now, and for the remainder of the paper, we fix a separated scheme $X$ of finite type over $K$.  Let $\iota \colon  X \hookrightarrow Y_\Sigma$ be a closed embedding in a toric variety.  Then the tropicalization
\[
\Trop{X}{\iota} \subset \trop(Y_\Sigma)
\]
is the image of the closed subset $\an{\iota(X)} \subset \an{Y_\Sigma}$ under $\trop$.
	
	Let $\mathfrak{a}\subset K[M]$ denote the ideal cutting out $\iota(X)\cap T$, where $T$ denotes the dense torus in $Y_{\Sigma}$. Every point $v \in N_{\RR}$ has an associated $R$-scheme
	\begin{equation}\label{equation: piece of model}
	\text{Spec}_{\ \!}R[M]^{v}\big/\big(\mathfrak{a}\cap R[M]^{v}\big),
	\end{equation}
where
	$$
	R[M]^{v}
	\ := \ 
	\left\{\sum_{u\in M}a_{u}\ \!\chi^{u}\in K[M] \, : \!\! 
	\begin{array}{c}
	\val(a_{u})+\langle u,v\rangle\ge0\ \mbox{ for all } u\in M
	\end{array} \! \!
	\right\}.
	$$
The special fiber of \eqref{equation: piece of model} is the {\em initial degeneration of $X$ at $v$}, denoted
	$
	\init_{v}(X);
	$
it is nonempty if and only if $v \in \Trop{X}{\iota}$. 

More generally, for any $w \in\trop(Y_{\Sigma})$, there is a unique cone $\sigma\in\Sigma$ such that $w$ lies in the tropicalization $\trop(O_\sigma)\cong N_\RR / \mathrm{span}(\sigma)$ of the open torus orbit $O_\sigma\subset Y_\Sigma$. In this case, $M_\sigma := \text{span}(\sigma)^\perp\cap M$ is dual to the lattice $\text{span}(\sigma)\cap N$, and we define
	$$
	R[M_\sigma]^{w}
	\  := \ 
	\left\{\sum_{u\in M_\sigma}a_{u}\ \!\chi^{u}\in K[M_\sigma] \, : \!\! 
	\begin{array}{c}
	\val(a_{u})+\langle u,w\rangle\ge0\ \mbox{ for all } u\in M_\sigma
	\end{array} \! \!
	\right\}.
	$$
The {\em initial degeneration of $X$ at $w$}, denoted $\init_{w}(\iota(X)\cap O_\sigma)$, is the special fiber of the scheme $\text{Spec}_{\ \!}R[M_\sigma]^{w}\big/\big(\mathfrak{a}\cap R[M_\sigma]^{w}\big)$. For each cone $\sigma \in \Sigma$, the intersection of $\Trop{X}{\iota}$ with $\trop(O_\sigma)$ is a finite polyhedral complex that parametrizes weight vectors on monomials in $M_\sigma$ such that $\init_w (\iota(X) \cap O_\sigma)$ 
is nonempty.   
For simplicity, given $\iota \colon  X\hookrightarrow Y_\Sigma$, $\sigma \in \Sigma$, and $w \in N_\RR / \mathrm{span}(\sigma)$, we sometimes write
$
\init_w(X)$ for $\init_w (\iota(X) \cap O_\sigma)$.
With this notation, 
\[
\Trop{X}{\iota} = \{ w \in \trop(Y_\Sigma) : \init_w (X) \neq \emptyset \}.
\]

\subsection{Admissible polyhedral covers}  \label{sec:admissible}
Let $\Gamma \subset \RR$ be the value group of $K$.  As in the previous section, we consider the toric variety $Y_\Sigma$ over $K$ associated to a fan $\Sigma$ in $N_\RR$.

Let $P \subset N_\RR$ be a polyhedron, the intersection of finitely many closed halfspaces.  Then the {\em recession cone} $\sigma_P$ is the closed polyhedral cone given by
\[
\sigma_P = \{ v \in N_\RR : v + P \subset P \}
\]
Equivalently, the cone $\sigma_P$ is obtained by taking the closure of the cone over $P \times \{ 1 \}$ in $N_\RR \times \RR$ and intersecting with $N_\RR \times \{ 0 \}$.

A polyhedron $P \subset N_\RR$  is $(\Gamma, \Sigma)$-\emph{admissible} if its recession cone $\sigma_P$ is in $\Sigma$, and $P$ itself can be expressed as an intersection of finitely many halfspaces
\[
P = \{ v \in N_\RR : \< u_i, v \> \geq \gamma_i, \mbox{ for } 1 \leq i \leq n \},
\]
with $u_i$ in the character lattice $M$ and $\gamma_i$ in $\Gamma$.  Note that any $(\Gamma, \Sigma)$-admissible polyhedron is pointed, i.e., its minimal faces have dimension zero, since the tail cone is in the fan $\Sigma$, which is a collection of pointed cones.

An \emph{extended $(\Gamma,\Sigma)$-admissible polyhedron} is the closure $\overline P$ in $\trop(Y_\Sigma)$ of a pointed $(\Gamma,\Sigma)$-admissible polyhedron $P$ in $N_\RR$.  An \emph{extended $(\Gamma,\Sigma)$-admissible polyhedral complex} $\Delta$ is a locally finite collection of extended $(\Gamma,\Sigma)$-admissible polyhedra, which we refer to as the \emph{faces} of the complex, whose intersections with $N_\RR$ form a polyhedral complex.  The \emph{support} $|\Delta|$ of $\Delta$ is the union of its faces, and $\Delta$ is \emph{complete} if $|\Delta| = \trop(Y_\Sigma)$.  Note that we require an extended $(\Gamma, \Sigma)$-admissible complex to be locally finite not only at points in the dense open subset $N_\RR \cap |\Delta|$, but at every point $w \in |\Delta|$.  See also Remark~\ref{rem:locallyfinite}, below.

\begin{remark}
Recall that $\Gamma$-admissible fans in $N_\RR \times \RR_{\geq 0}$, in the sense of \cite[\S7]{Gubler13}, correspond naturally and bijectively with normal toric varieties over the valuation ring $R \subset K$ \cite{GublerSoto15}.  The basic construction is recalled in \S\ref{section: Admissible polyhedral covers and Gubler models} below.  For now, note that extended $(\Gamma,\Sigma)$-admissible polyhedra correspond naturally and bijectively with $\Gamma$-admissible cones in $N_\RR \times \RR_{\geq 0}$ that meet $N_\RR \times \RR_{>0}$ and whose intersection with $N_\RR \times \{ 0 \}$ is a face of $\Sigma$.  This correspondence takes an extended $(\Gamma,\Sigma)$-admissible polyhedron $\overline P$ to the closure of the cone over $P \times \{1\}$.  Similarly, locally finite extended $(\Gamma,\Sigma)$-admissible polyhedral complexes correspond naturally and bijectively with the locally finite $\Gamma$-admissible fans in $N_\RR \times \RR_{\geq 0}$ whose restriction to $N_\RR \times \{0 \}$ is $\Sigma$.  The latter correspond, in turn, with locally finite type toric schemes over the valuation ring whose general fiber is $Y_\Sigma$.
\end{remark}

We will use the following lemma, on existence of complete complexes that simultaneously refine any finite collection of admissible polyhedra, in the proofs of Theorems~~\ref{theorem: cofinality of Gubler models} and \ref{theorem: 2}.  It is an immediate consequence of the following result of Coles and Friedenburg.

\begin{theorem*}[{\cite[Theorem~1.1]{ColesFriedenberg23}}]
Let $\Delta'$ be a finite extended $(\Gamma, \Sigma)$-admissible polyhedral complex. Then there is a complete and locally finite extended $(\Gamma, \Sigma)$-admissible polyhedral complex that contains $\Delta'$ as a subcomplex.
\end{theorem*}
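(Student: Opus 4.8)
The plan is to pass to the homogenized picture, using the correspondence recalled in the Remark in \S\ref{sec:admissible}: an extended $(\Gamma,\Sigma)$-admissible polyhedral complex $\Delta$ is the same data as a $\Gamma$-admissible fan $\widetilde\Delta$ in $N_\RR \times \RR_{\geq 0}$ whose trace on $N_\RR \times \{0\}$ is a subfan of $\Sigma$, with $\Delta$ complete and locally finite precisely when $\widetilde\Delta$ is locally finite, has support equal to all of $N_\RR\times\RR_{\geq 0}$, and has boundary trace exactly $\Sigma$. Under this dictionary the theorem becomes: given a finite $\Gamma$-admissible fan $\widetilde{\Delta'}$ in $N_\RR\times\RR_{\geq 0}$ with $\widetilde{\Delta'}\cap(N_\RR\times\{0\})$ a finite subfan $\Sigma_0\subseteq\Sigma$, extend $\widetilde{\Delta'}$ — \emph{without subdividing} any of its cones — to a locally finite $\Gamma$-admissible fan $\widetilde\Sigma$ with support $N_\RR\times\RR_{\geq 0}$ and boundary trace $\Sigma$. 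Working on the cone side is the key simplification: the ``faces meet along faces'' axiom of a fan is automatically compatible with the stratification $\trop(Y_\Sigma)=\bigsqcup_\tau N_\RR/\mathrm{span}(\tau)$, so the fussy compatibility conditions between the faces-at-infinity in the various boundary strata, and the requirement of local finiteness at boundary points, all collapse into the single requirement that $\widetilde\Sigma$ be a locally finite fan.

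The construction then proceeds in three pieces. (a) Far from $|\widetilde{\Delta'}|$ we simply use the product fan $\Sigma\times\RR_{\geq 0}$, which is locally finite, $\ZZ$-rational (hence $\Gamma$-admissible), covers everything, and restricts to $\Sigma$ on the boundary; only finitely many cones $F\subseteq\Sigma$ meet or lie below $|\widetilde{\Delta'}|$, and outside $\bigcup_{\sigma\in F}\sigma\times\RR_{\geq 0}$ nothing will change. (b) Over the cones in $F$ one chooses a closed neighborhood $U$ of $|\widetilde{\Delta'}|$ with $\Gamma$-rational piecewise-polyhedral boundary compatible with $\Sigma$ on $N_\RR\times\{0\}$, and fills the shell $\overline{U\setminus|\widetilde{\Delta'}|}$ by a $\Gamma$-admissible fan whose trace on $|\widetilde{\Delta'}|$ is the boundary complex of $\widetilde{\Delta'}$ left unchanged and whose trace on $\partial U$ and on $N_\RR\times\{0\}$ is compatible with $\Sigma$. (c) Finally one refines $\Sigma\times\RR_{\geq 0}$ outside $U$ only as much as needed to match along $\partial U$; since $\Sigma\times\RR_{\geq 0}$ is a product, this can be done leaving $N_\RR\times\{0\}$ untouched, so the boundary trace is still $\Sigma$. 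Rationality survives because $\Gamma$ is divisible (as $K$ is algebraically closed), so any new rays or vertices may be taken with coordinates in $\Gamma$, while the cones of $\widetilde{\Delta'}$ on the interface already carry $\Gamma$-rational defining data; local finiteness survives because we begin with the finite fan $\widetilde{\Delta'}$ together with the locally finite product fan and perform only finitely many local modifications, all supported over $F$.

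The main obstacle is step (b): extending a polyhedral subdivision prescribed on the closed set $|\widetilde{\Delta'}|$ (and partly on $\partial U$) across the shell to an honest polyhedral complex \emph{without re-subdividing $\widetilde{\Delta'}$}. If $\widetilde{\Delta'}$ were a regular subdivision one could extend the defining convex function and take the induced subdivision, but in general it need not be regular, so one must instead argue by induction on the dimension of the skeleton, filling the shell around each face of $\widetilde{\Delta'}$ using its star together with the local normal-fan structure and verifying that these local patches agree on shared faces. Carrying out this inductive gluing while keeping every new face $(\Gamma,\Sigma)$-admissible (recession cone in $\Sigma$, offsets in $\Gamma$) and keeping the whole collection locally finite near the boundary hyperplane is the technical heart of the statement, and is the step I expect to be genuinely difficult; it is presumably where the work of Coles--Friedenberg is concentrated. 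Once (b) is in hand, steps (a) and (c) are routine, and transporting the resulting fan back through the dictionary of the Remark produces the desired complete, locally finite, extended $(\Gamma,\Sigma)$-admissible complex containing $\Delta'$ as a subcomplex.
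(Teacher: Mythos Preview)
The paper does not prove this statement at all: it is quoted verbatim as \cite[Theorem~1.1]{ColesFriedenberg23} and used as a black box (immediately afterwards, in the proof of Lemma~\ref{lem:refinements}). So there is no ``paper's own proof'' to compare your proposal against.

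As for your sketch on its own terms: the reduction to the homogenized fan picture is correct and is indeed the natural setting. But what you have written is an outline, not a proof, and you say so yourself: step~(b)---extending a prescribed polyhedral subdivision across a shell without subdividing the given cones---is precisely the content of the cited result, and you explicitly defer it to Coles--Friedenberg. Steps~(a) and~(c) are also less routine than you suggest: ensuring that the refinement in~(c) matches $\partial U$ while leaving $N_\RR\times\{0\}$ with trace \emph{exactly} $\Sigma$ (not a refinement of $\Sigma$) already requires care, and the local finiteness claim at boundary points needs an argument rather than an assertion. In short, your proposal correctly identifies the strategy and the hard step, but it is a roadmap to the Coles--Friedenberg paper rather than an independent proof.
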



\begin{lemma}\label{lem:refinements}
Let $\overline P_1, \dots, \overline P_n$ be extended $(\Gamma,\Sigma)$-admissible polyhedra.  Then there is a complete, locally finite extended $(\Gamma,\Sigma)$-admissible polyhedral complex $\Delta$ such that each $\overline P_i$ is a union of faces of $\Delta$, for $1 \leq i \leq n$.
\end{lemma}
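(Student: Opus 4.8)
The plan is to reduce Lemma~\ref{lem:refinements} to the Coles--Friedenberg theorem quoted just above by producing, from the given polyhedra $\overline P_1, \dots, \overline P_n$, a single \emph{finite} extended $(\Gamma,\Sigma)$-admissible polyhedral complex $\Delta'$ in which each $\overline P_i$ appears as a union of faces; applying their theorem to $\Delta'$ then yields a complete, locally finite extended $(\Gamma,\Sigma)$-admissible complex $\Delta$ containing $\Delta'$ as a subcomplex, and in particular each $\overline P_i$ remains a union of faces of $\Delta$. So the real content is the construction of $\Delta'$, i.e., a common polyhedral refinement of a finite list of admissible polyhedra.

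First I would work inside a fixed orbit, i.e., replace each $\overline P_i$ by its intersection with $N_\RR$ when the recession cone lies in the interior-stratum picture, but more systematically I would pass to the cone picture described in the Remark: each extended $(\Gamma,\Sigma)$-admissible polyhedron $\overline P_i$ corresponds to the $\Gamma$-admissible cone $C_i := \overline{\mathrm{cone}(P_i \times \{1\})} \subset N_\RR \times \RR_{\geq 0}$, whose intersection with $N_\RR \times \{0\}$ is a face $\sigma_i$ of $\Sigma$. It then suffices to find a finite $\Gamma$-admissible fan $\widetilde\Delta$ in $N_\RR \times \RR_{\geq 0}$ whose restriction to $N_\RR \times \{0\}$ refines the relevant subfan of $\Sigma$ and such that each $C_i$ is a union of cones of $\widetilde\Delta$; then $\Delta' := \widetilde\Delta \cap (N_\RR \times \{1\})$ is the desired finite extended complex. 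Because admissibility of a polyhedron is exactly the condition that its defining halfspaces use functionals in $M$ and constants in $\Gamma$, and there are only finitely many facet-defining halfspaces among $\overline P_1, \dots, \overline P_n$ (and finitely many walls of $\Sigma$), the coarsest common refinement obtained by cutting $N_\RR \times \RR_{\ge0}$ along all of these finitely many (rational, $\Gamma$-level) hyperplanes is a finite fan, is $\Gamma$-admissible, refines $\Sigma$ on the boundary, and has each $C_i$ as a union of its cones. Translating back via the correspondence of the Remark gives $\Delta'$; one must also check that this $\Delta'$ is genuinely a polyhedral complex in the extended sense, i.e., that its faces, including their closures at infinity, fit together along common faces — this follows because the cone construction is compatible with taking faces, and the Coles--Friedenberg hypothesis only requires $\Delta'$ finite and admissible.

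The main obstacle I anticipate is bookkeeping at the boundary strata: the polyhedra $\overline P_i$ live in different orbits $\trop(O_{\sigma_i})$, and an honest common refinement must be compatible with the stratification of $\trop(Y_\Sigma)$ and with $\Sigma$ itself, so one cannot simply intersect halfspaces in a single vector space. Working in $N_\RR \times \RR_{\geq 0}$ and using the dictionary between extended admissible polyhedra/complexes and $\Gamma$-admissible cones/fans (as recalled in the Remark) is what cleanly handles this, because there the whole problem becomes the elementary statement that finitely many rational cones admit a common fan refinement that also refines a given fan $\Sigma$ on a coordinate hyperplane, together with the observation that ``defined over $\Gamma$'' is preserved under taking such a refinement. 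Once $\Delta'$ is in hand, invoking \cite[Theorem~1.1]{ColesFriedenberg23} finishes the proof, since enlarging $\Delta'$ to a complete locally finite complex cannot destroy the property that each $\overline P_i$ is a union of faces.
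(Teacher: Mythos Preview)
Your strategy---build a finite common refinement $\Delta'$ first, then apply Coles--Friedenberg once---reverses the order of operations in the paper's proof, and this reversal creates a genuine gap. The hyperplane-arrangement refinement you propose does not in general produce $(\Gamma,\Sigma)$-admissible polyhedra: cutting $P_1$ along a facet hyperplane of $P_2$ can yield pieces whose recession cones are proper subcones of $\sigma_{P_1}$ that are not faces of $\sigma_{P_1}$, hence not in $\Sigma$. Concretely, take $N=\ZZ^3$, let $\Sigma$ be the fan of faces of the positive octant $\sigma$, set $P_1=\sigma$, and set $P_2=\{z\geq 0,\ 0\leq x-y\leq 1,\ 0\leq x+y\leq 1\}$. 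Then $\sigma_{P_2}$ is the positive $z$-axis, a face of $\sigma$, so $P_2$ is $(\Gamma,\Sigma)$-admissible; but slicing $P_1$ along the facet hyperplane $\{x=y\}$ of $P_2$ produces the half-octant $\{x,y,z\geq 0,\ x\geq y\}$, whose recession cone is itself and lies in no face of $\Sigma$. Your fan $\widetilde\Delta$ therefore only \emph{refines} $\Sigma$ on $N_\RR\times\{0\}$, as you yourself say, whereas the dictionary in the Remark requires the boundary cones to \emph{lie in} $\Sigma$. Without that, $\Delta'$ is not an extended $(\Gamma,\Sigma)$-admissible complex and the Coles--Friedenberg theorem does not apply to it.

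The paper instead applies Coles--Friedenberg $n$ times, once to each single-polyhedron complex $\{\overline P_i\}$, obtaining complete locally finite $(\Gamma,\Sigma)$-admissible complexes $\Delta_1,\dots,\Delta_n$, and then takes their common refinement $\Delta$. This ordering succeeds because the recession cone of an intersection $Q_1\cap\cdots\cap Q_n$ with $Q_j\in\Delta_j$ is $\sigma_{Q_1}\cap\cdots\cap\sigma_{Q_n}$, and an intersection of cones of the fan $\Sigma$ is a common face and hence again in $\Sigma$; local finiteness and completeness are likewise preserved under common refinement. In short: complete first, then refine---not the other way around.
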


\begin{proof}
For each $i$ there is a complete locally finite extended $(\Gamma,\Sigma)$-admissible complex $\Delta_i$ that contains $\overline P_i$ as a face, by \cite[Theorem~1.1]{ColesFriedenberg23}.  Then we can take $\Delta$ to be the smallest common refinement of $\Delta_1, \ldots, \Delta_n$.
\end{proof}

\subsection{Gubler models of toric varieties} \label{section: Admissible polyhedral covers and Gubler models}

If $\overline P \subset \trop(Y_\Sigma)$ is an extended $(\Gamma,\Sigma)$-admissible polyhedron, then $\trop^{-1}(\overline P)$ is a strictly affinoid analytic domain in $\an{Y_\Sigma}$, and is canonically realized as the Raynaud fiber of the formal completion of a flat $R$-scheme whose generic fiber is the affine open subvariety $U_{\sigma_P} \subset Y_\Sigma$, as we now explain.  Let $R[M]^{P}$ denote the $R$-algebra
	\begin{equation}\label{equation: definition of tilted algebra}
	R[M]^{P}
	\ =\ 
	\left\{\sum_{u\in M}a_{u}\ \!\chi^{u}\in K[M] \ : \!
	\begin{array}{c}
	\val(a_{u})+\langle u,v\rangle\ge0\ \mbox{ for all } v\in P \!\!
	\end{array}
	\right\}, 
	\end{equation}
and let $\mathscr{U}_{P}$ denote the $R$-scheme $\mathscr{U}_{P}=\Spec{R[M]^{P}}$. By \cite[Propositions 6.6 and 6.10]{Gubler13}, $\mathscr{U}_{P}$ is a normal scheme flat over $R$ with generic fiber $(\mathscr{U}_{P})_{K}:=\mathscr{U}_{P}\otimes_{R}K$ naturally isomorphic to the affine toric variety associated to the recession cone $\sigma_{P}$:
	$$
	(\mathscr{U}_{P})_{K}
	\ \cong\ 
	U_{\sigma_{P}}.
	$$
Let $\mathfrak{U}_{P}$ denote the formal $R$-scheme obtained as the completion of $\mathscr{U}_{P}$ along its special fiber. Since $K$ is algebraically closed, the value group $\Gamma$ is divisible, and hence the formal scheme $\mathfrak{U}_{P}$ is admissible \cite[Proposition~6.7]{Gubler13}.

	Let $\an{\mathfrak{U}_{P}}$ denote the Raynaud fiber of $\mathfrak{U}_{P}$. Then the Raynaud fiber $\an{\mathfrak{U}_{P}}$ is naturally identified with a strictly affinoid domain inside the Berkovich analytification $\an{U_{\sigma_{P}}}$ \cite[\S4.13]{Gubler13}. Specifically,
	$$
	\an{\mathfrak{U}_{P}}
	\ \cong\ 
	\big\{x\in\an{U_{\sigma_{P}}} : |f(x)|\le 1\ \mbox{for all } f\in K[S_{\sigma_{P}}] \big\}.
	$$
Applying \cite[Lemma 6.21]{Gubler13} to each orbit $O_{\sigma}\subset Y_{\Sigma}$, for $\sigma\in\Sigma$, we see that this strictly affinoid domain is the inverse image of $\overline P\subset\trop(Y_{\Sigma})$ under the tropicalization map, 
	\begin{equation}\label{equation: Raynaud fiber as inverse image of polyhedron}
	\an{\mathfrak{U}_{P}}=\trop^{-1}(\overline P).
	\end{equation}

These models of strictly affinoid domains associated to extended $(\Gamma,\Sigma)$-admissible polyhedra glue together naturally, as follows.  Each inclusion of a face of an extended $(\Gamma,\Sigma)$-admissible polyhedron $\overline Q \subset \overline P$ induces a Zariski open embedding of $R$-schemes $\mathscr{U}_{Q}\subset \mathscr{U}_{P}$ and a Zariski open embedding of formal $R$-schemes $\mathfrak{U}_{Q}\subset \mathfrak{U}_{P}$.  If $\Delta$ is an extended $(\Gamma,\Sigma)$-admissible polyhedral complex, gluing along these open embeddings naturally produces a flat $R$-scheme $\mathscr{Y}_{\Delta}$ and an admissible formal $R$-scheme $\mathfrak{Y}_\Delta$, respectively.  By construction, $\mathfrak{Y}_{\Delta}$ is the formal completion of $\mathscr{Y}_{\Delta}$ along its special fiber.  Furthermore, the Raynaud fiber $\an{\mathfrak{Y}_{\Delta}}$ is the preimage of the support $|\Delta| \subset \trop(Y_\Sigma)$ under the tropicalization map, i.e.,
	\begin{equation} \label{eq:isom}
	\an{\mathfrak{Y}_{\Delta}} = \mathrm{Trop}^{-1}(|\Delta|).
	\end{equation}

\begin{remark} \label{rem:locallyfinite}
In \cite{Gubler13}, such algebraic and formal models are considered only for finite complexes $\Delta$.  Even for an arbitrary (infinite) complex $\Delta$, one may construct the affine $R$-schemes $\mathscr{U_P}$, glue along the open embeddings corresponding to inclusions of faces to construct $\mathscr{Y}_\Delta$, and formally complete along the special fiber to obtain $\mathfrak{Y}_\Delta$.  Note, however, that if some face of the complex is contained in infinitely many other faces, then the resulting formal scheme is not paracompact, and hence does not have a Raynaud fiber in the category of Berkovich spaces.  Moreover, even when every face is contained in only finitely many other faces, one must take care in comparing the Raynaud fiber $\an{\mathfrak{Y}_{\Delta}}$ with $\mathrm{Trop}^{-1}(|\Delta|)$; indeed, Example~\ref{ex:not-loc-finite} shows that these two analytic spaces are not isomorphic in general when $\Delta$ is not locally finite.  Nevertheless, the standard constructions of $\an{\mathfrak{Y}_{\Delta}}$, its natural map to $\mathrm{Trop}^{-1}(|\Delta|)$, and the proof that this map is an isomorphism when $\Delta$ is finite are all local; they extend verbatim to the case where $\Delta$ is locally finite.
\end{remark}

\begin{example} \label{ex:not-loc-finite}
We briefly sketch an example of a $(\Gamma,\Sigma)$-admissible complex that is not locally finite, for which the analogue of \eqref{eq:isom} does not hold.  Let $\Gamma = \QQ$, $N = \ZZ$, and $\Sigma = \{ 0 \}$.  Consider the complex $\Delta$ whose maximal faces are the point $0$ together with the intervals $ \big[\frac{1}{n+1},\frac{1}{n} \big]$ for positive integers $n$.  Then $|\Delta| = [0,1]$, but $\Delta$ is not locally finite at $0$.  The Raynaud fiber $\an{\mathfrak{Y}_{\Delta}}$ is the disjoint union of $\mathrm{Trop}^{-1}(0)$ and $\mathrm{Trop}^{-1}((0,1])$, which is disconnected, and not isomorphic to $\mathrm{Trop}^{-1}(|\Delta|)$.
\end{example}

\subsection{Gubler models of closed subvarieties}\label{section: Gubler models of closed subvarieties}
	Let $\iota \colon X \hookrightarrow Y_\Sigma$ be the inclusion of a closed subscheme, and let $\Delta$ be an extended $(\Gamma,\Sigma)$-admissible polyhedral complex in $\trop(Y_{\Sigma})$.  Then, for each face $\overline P$ of $\Delta$, we obtain an $R$-model $\mathscr{X}_{P}$ of $X\cap U_{\sigma_P}$ as the closure of $X$ in the toric scheme $\mathscr{U}_P$.  Gluing along the inclusions of faces in $\Delta$, we obtain an $R$-model $\mathscr{X}_{\Delta}$ of $X$, which we refer to as the ({\em algebraic}) {\em Gubler model of $X$ associated to the pair $(\iota,\Delta)$}.  See also \cite{ColesFriedenberg23b} for further discussion of such formal models.
	
	Completing each $R$-scheme $\mathscr{X}_{P}$ along its special fiber, we obtain an admissible formal $R$-scheme $\mathfrak{X}_{P}$. Gluing these formal schemes along the inclusions in $\Delta$, we obtain an admissible formal $R$-scheme $\mathfrak{X}_{\Delta}$ isomorphic to the completion of $\mathscr{X}_{\Delta}$ along its special fiber, whose Raynaud fiber $\an{\mathfrak{X}_\Delta}$ is the preimage of $|\Delta|$ in $\an{X}$.  In particular, $\mathfrak{X}_\Delta$ is a formal model of $\an{X}$ if and only if $|\Delta|$ contains $\Trop{X}{\iota}$. In this case, we say that $\Delta$ \emph{covers} $\Trop{X}{\iota}$ and refer to $\mathfrak{X}_{\Delta}$ as the {\em formal Gubler model of $\an{X}$ associated to the pair $(\iota,\Delta)$}. When we want to stress the role of the closed embedding $\iota\colon X \hookrightarrow Y_\Sigma$ in the construction of the formal Gubler model $\mathfrak{X}_{\Delta}$, we write $\mathfrak{X}_{(\iota,\Delta)}$.

\subsection{Adic tropicalization as an inverse limit of formal models}\label{subsection: adic tropicalization as an inverse limit of formal models}

	Let $\Delta$ and $\Delta'$ be extended $(\Gamma,\Sigma)$-admissible polyhedral complexes that cover $\Trop{X}{\iota}$.  We say that $\Delta'$ \emph{refines} $\Delta$ if each face $\overline P'$ of $\Delta'$ is contained in some face $\overline P$ of $\Delta$.  The induced maps $\mathfrak{X}_{P'} \rightarrow \mathfrak{X}_P$ glue to produce a morphism of formal $R$-schemes $\mathfrak{X}_{\Delta'}\lra\mathfrak{X}_{\Delta}$.  This gives a functor from the category of extended $(\Gamma, \Sigma)$-admissible complexes that cover $\Trop{X}{\iota}$, in which the morphisms are refinements, to the category of admissible formal models of $\an{X}$.

\begin{definition}
	The {\em adic tropicalization} of the closed embedding $\iota\colon X \hookrightarrow Y_\Sigma$ is the locally topologically ringed space 
	$$
	\big(\mathfrak{Trop}(X,\iota),\mathscr{O}_{\mathfrak{Trop}(X,\iota)}\big)
	\ :=\ 
	\varprojlim (\mathfrak{X}_{\Delta},\mathscr{O}_{\mathfrak{X}_{\Delta}}),
	$$
	where the limit is taken over all models associated to extended $(\Gamma, \Sigma)$-admissible polyhedral complexes that cover $\Trop{X}{\iota}$ and all morphisms induced by refinements. 
	\end{definition}
	
\noindent	When no confusion seems possible, we denote both the adic tropicalization of $\iota$ and its underlying topological space simply by $\mathfrak{Trop}(X,\iota)$.  See \cite[Remark~2.2.4]{Foster16} for the existence of limits in the category of locally topologically ringed spaces.

\begin{remark}\label{remark: on adic tropicalization}
	For a detailed example of the adic tropicalization of a line in $\mathbb{P}^{2}$, with accompanying figures, see \cite[Example~3.5.6]{Foster16}.
\end{remark}
	
	Consider the \emph{category of toric embeddings of $X$}, whose objects are closed embeddings $\iota \colon X\hookrightarrow Y_{\Sigma}$ into toric varieties, and whose morphisms are commutative diagrams
	$$
	\begin{aligned}
	\begin{xy}
	(0,0)*+{X}="1";
	(15,7)*+{Y_{\Sigma}}="2";
	(15,-7)*+{Y_{\Sigma'}}="3";
	{\ar@{^{(}->}^{\iota} "1"; "2"};
	{\ar@{_{(}->}_{\iota'} "1"; "3"};
	{\ar@{->}^{f} "2"; "3"};
	\end{xy}
	\end{aligned}
	$$
induced by a toric morphism $\phi\colon\Sigma\lra\Sigma'$ of fans.  Given such a diagram, let $\Delta'$ be an extended  $(\Gamma, \Sigma)$-admissible polyhedral complex that covers $\Trop{X}{\iota'}$.  Note that we can refine any extended $(\Gamma, \Sigma)$-admissible polyhedral complex $\Delta$ that covers $\Trop{X}{\iota}$ to obtain a cover $\Delta^{+}$ such that the induced morphism $\trop(f)\colon\trop(Y_{\Sigma})\lra\trop(Y_{\Sigma'})$ maps each $\overline P \in \Delta^{+}$ into some $\overline P' \in\Delta'$. There are then induced morphisms of algebraic and admissible formal $R$-schemes
\[
\mathscr{X}_{P} \rightarrow \mathscr{X}_{P'} \mbox{ \ \ and \ \ } \mathfrak{X}_{P} \rightarrow \mathfrak{X}_{P'},
\]
which glue to give morphisms of algebraic and formal Gubler models
\[
\mathscr{X}_{\Delta} \rightarrow \mathscr{X}_{\Delta'} \mbox{ \ \ and \ \ } \mathfrak{X}_{\Delta} \rightarrow \mathfrak{X}_{\Delta'}.
\]

From this it follows that each morphism $f$ of toric embeddings induces a morphism of adic tropicalizations, 
$$
\mathfrak{Trop}(X,f)\colon\mathfrak{Trop}(X,\iota)\lra\mathfrak{Trop}(X,\iota'),
$$
in the category of locally topologically ringed spaces; hence, adic tropicalization is a functor from toric embeddings to locally topologically ringed spaces.

\subsection{Adic tropicalization as a union of initial degenerations}\label{subsection: adic tropicalization as a union of initial degenerations}
We now show that the disjoint union of the initial degenerations $\init_w(X)$, for $w \in \trop(X, \iota)$, is naturally identified with the underlying set of the adic tropicalization $\fTrop{X}{\iota}$.
	
\begin{lemma}\label{lemma: initial degeneration as an inverse limit}
	For each point $w\in\Trop{X}{\iota}$, not necessarily $\Gamma$-rational, there is a natural isomorphism of $k$-schemes
	$$
	\init_{w}(X)
	\xrightarrow{\ \sim\ }
	\varprojlim_{P\ni w}(\mathfrak{X}_{P})_{s},
	$$
	where $P$ ranges over all $(\Gamma,\Sigma)$-admissible polyhedra that contain $w$.
\end{lemma}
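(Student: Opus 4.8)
The plan is to first reduce to the case where $w$ lies in the dense orbit, so that the cone $\sigma$ with $w \in \trop(O_\sigma)$ is trivial and $M_\sigma = M$, $O_\sigma = T$; the general case is handled by the identical argument applied to $\iota(X) \cap O_\sigma \hookrightarrow O_\sigma$ and the lattice $M_\sigma$, exactly as in the orbit-wise description of tropicalization in \S\ref{subsection: tropicalization} (following \cite[Lemma~6.21]{Gubler13}). So assume $w \in N_\RR$. The natural map is built as follows: for every $(\Gamma,\Sigma)$-admissible polyhedron $P$ with $w \in P$, evaluating the defining inequalities of $R[M]^P$ at the single point $v = w$ shows $R[M]^P \subseteq R[M]^w$ inside $K[M]$, and this inclusion carries $\mathfrak{a} \cap R[M]^P$ into $\mathfrak{a} \cap R[M]^w$; reducing modulo these ideals and applying $-\otimes_R k$ gives a morphism of $k$-schemes $\init_w(X) \to (\mathscr{X}_P)_s = (\mathfrak{X}_P)_s$, where I use that formal completion along the special fiber does not change the special fiber. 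Since $R[M]^P \subseteq R[M]^{P'} \subseteq R[M]^w$ whenever $P' \subseteq P$, these morphisms are compatible with refinements and assemble into the canonical map $\init_w(X) \to \varprojlim_{P \ni w}(\mathfrak{X}_P)_s$ that we must show is an isomorphism.

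Next I would make a cofinality reduction. A product of intervals with endpoints in $\Gamma$ straddling the coordinates of $w$ is a bounded $(\Gamma,\Sigma)$-admissible polyhedron containing $w$ (such a box exists because $\Gamma$ is dense in $\RR$, $K$ being algebraically closed), and intersecting any admissible $P \ni w$ with such a box yields a bounded admissible polyhedron still containing $w$ and contained in $P$. Hence the bounded polyhedra $P \ni w$ --- equivalently those with recession cone $\sigma_P = \{0\}$ --- are cofinal, and the inverse limit is computed over them. This is convenient: for bounded $P$ the generic fiber of $\mathscr{U}_P = \Spec R[M]^P$ is the torus $T = U_{\{0\}}$, so $\mathscr{X}_P$ is the closure of $X \cap T$ and its defining ideal in $R[M]^P$ is exactly $\mathfrak{a} \cap R[M]^P$, with no contribution from possible boundary components of $X$.

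The heart of the argument is then the colimit identity
\[
R[M]^w \;=\; \varinjlim_{P \ni w} R[M]^P \;=\; \bigcup_{P \ni w} R[M]^P \;\subseteq\; K[M],
\]
the union over bounded admissible polyhedra containing $w$. The inclusion $\supseteq$ is immediate, and for $\subseteq$, given $f = \sum_u a_u \chi^u \in R[M]^w$ with finite support, the polyhedron $P_f := \{v \in N_\RR : \val(a_u) + \langle u,v\rangle \geq 0 \text{ for all } u \text{ with } a_u \neq 0\} \cap B$, with $B$ a box as above, is a bounded $(\Gamma,\Sigma)$-admissible polyhedron containing $w$ with $f \in R[M]^{P_f}$ (here one uses $\val(a_u) \in \Gamma$). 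The rest is formal: each $R[M]^P/(\mathfrak{a} \cap R[M]^P)$ embeds compatibly into $K[M]/\mathfrak{a}$ with image the image of $R[M]^P$, so taking the filtered colimit gives $R[M]^w/(\mathfrak{a} \cap R[M]^w)$; since $-\otimes_R k$ preserves colimits, $\mathscr{O}(\init_w X) = \varinjlim_{P \ni w} \mathscr{O}\big((\mathfrak{X}_P)_s\big)$. Finally the affine schemes $(\mathfrak{X}_P)_s$ have affine transition morphisms, so $\Spec$ of this filtered colimit of rings is their cofiltered limit in the category of schemes; unwinding the identifications shows this isomorphism $\init_w(X) \xrightarrow{\sim} \varprojlim_{P \ni w}(\mathfrak{X}_P)_s$ is the canonical map above.

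I expect the colimit identity for the tilted algebras to be the main point --- it is essentially why the lemma is true --- and its proof depends on the availability of arbitrarily fine $(\Gamma,\Sigma)$-admissible polyhedra around a possibly non-$\Gamma$-rational point $w$, which relies on divisibility (or at least density) of $\Gamma$ and on the fact that intersecting with a $\Gamma$-rational box preserves $(\Gamma,\Sigma)$-admissibility. The reduction to $w \in N_\RR$ and the two standard facts invoked along the way --- that formal completion along the special fiber preserves the special fiber, and that $\Spec$ carries filtered colimits of rings to cofiltered limits of affine schemes --- require care in the non-Noetherian setting but are not serious obstacles.
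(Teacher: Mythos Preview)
Your argument for $w\in N_\RR$ is essentially the paper's, with the added (and valid) cofinality reduction to bounded polyhedra; the colimit identity $R[M]^w=\bigcup_{P\ni w}R[M]^P$ is indeed the heart of the matter, and your justification via $P_f$ matches the paper.

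The gap is in your reduction of the general case. When $w\in\trop(O_\sigma)$ with $\sigma\neq\{0\}$, the inverse limit in the lemma is over $(\Gamma,\Sigma)$-admissible polyhedra $P\subset N_\RR$ whose closures $\overline P$ contain $w$, and the terms $(\mathfrak{X}_P)_s$ are special fibers of schemes cut out of $\Spec R[M]^P$. Running your torus argument for $\iota(X)\cap O_\sigma\hookrightarrow O_\sigma$ instead yields an isomorphism with a \emph{different} inverse limit, namely over polyhedra $Q\subset N_\RR/\mathrm{span}(\sigma)$ containing $w$, with terms built from $R[M_\sigma]^Q$. These are not the same system: for each $P$ with $\overline P\ni w$ one has a strict inclusion $R[M_\sigma]^{\overline P\cap\trop(O_\sigma)}\hookrightarrow R[M]^P$, so the individual special fibers do not match. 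What must be shown is that this inclusion becomes an isomorphism after passing to the union over all such $P$ and reducing modulo $\mathfrak m$. Concretely, one has to check that any monomial $a_u\chi^u\in R[M]^P$ with $u\notin M_\sigma$ dies over $k$; the paper does this by translating $P$ by a $\Gamma$-rational interior point $v'$ of $\sigma$, so that $\overline{P+v'}$ still contains $w$ while $a_u\chi^u\in\mathfrak m\,R[M]^{P+v'}$. This step is short but not automatic, and it is exactly what your orbit-wise reduction skips. Either supply this vanishing argument, or else argue directly that the two inverse systems have canonically isomorphic limits; the citation of \cite[Lemma~6.21]{Gubler13} alone does not do this.
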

\begin{proof}
	Consider first the special case where $w$ is in the dense open subset $N_\RR \subset \mathrm{Trop}(Y_\Sigma)$.  For each $(\Gamma,\Sigma)$-admissible polyhedron $P$ that contains $w$, we have $R[M]^{P}\subset R[M]^w$. Furthermore, from \eqref{equation: definition of tilted algebra} we see that, for each $f \in K[M]$ there is a $(\Gamma,\Sigma)$-polyhedron $P_f$ (possibly empty) such that $f \in R[M]^{w}$ if and only if $w \in P_f$.  It follows that $R[M]^w = \bigcup_{P \ni w} R[M]^P$.  Passing to the special fibers of the associated $R$-schemes, we conclude that $\init_w{X} \cong \varprojlim_{P \ni w} (\mathfrak{X}_{P})_s$, as required.
	
	For the general case, consider $w\in\trop(O_\sigma)$, for some $\sigma\in \Sigma$. Let $\overline P$ be an extended $(\Gamma,\Sigma)$-admissible polyhedron that contains $w$.  Then the recession cone of $P$  contains $\sigma$, and $\overline{P}\cap\text{Trop}(O_\sigma)$ is the image of $P$ under the projection $N_\RR\longrightarrow N_\RR/\text{span}(\sigma)$. Because each linear function in $u \in M_\sigma$ is constant on the fibers of this projection, we obtain inclusions $R[M_\sigma]^{\overline{P}\cap\trop(O_\sigma)}\hookrightarrow R[M]^{P}$, and 
	\begin{equation}\label{equation: needed isomorphism of unions*}
	\bigcup_{\overline{P}\ni w}R[M_\sigma]^{\overline{P}\cap\trop(O_\sigma)}
	\ \xymatrix{{}\ar@{^{(}->}[r]&{}}
	\bigcup_{\overline{P}\ni w}\!R[M]^{P}.
	\end{equation}
We must show that \eqref{equation: needed isomorphism of unions*} becomes an isomorphism over the residue field $k$.  As a first step, suppose $a_u \chi^u$ is a nonzero monomial in $\bigcup_{\overline{P}\ni w}\!R[M]^{P}$ and $u \not \in M_\sigma$.  We claim that $a_u \chi^u$ vanishes modulo the maximal ideal $\mathfrak{m} \subset R$.  To see this, note that the linear function $u$ must be strictly positive on the interior of $\sigma$, which contains a $\Gamma$-rational point $v'$.  Let $P' = P+v'$.  Then $w \in \overline{P}'$ and $a_u \chi^u \in \mathfrak{m} R[M]^{P'}$.  Hence $a_u \chi^u$ vanishes modulo $\mathfrak{m}$, as claimed.  The remainder of the argument is similar to the previous case.
\end{proof}

\begin{proposition}\label{proposition: adic trop as exploded top}
	There is a natural bijection
	\begin{equation}\label{equation: exploded trop identification map}
	\bigsqcup_{w\in\trop(X,\iota)}\!\!\!\!\!\!\big|\init_{w}(X)\big|
	\xrightarrow{\ \sim\ }
	\big|\mathfrak{Trop}(X,\iota)\big|
	\end{equation}
between the set underlying the adic tropicalization $\fTrop{X}{\iota}$ and the disjoint union of all initial degenerations of $X$ at all points on $\trop(X,\iota)$.
\end{proposition}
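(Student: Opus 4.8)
The plan is to build the bijection \eqref{equation: exploded trop identification map} by assembling, over all points $w \in \Trop{X}{\iota}$, the natural isomorphisms supplied by Lemma~\ref{lemma: initial degeneration as an inverse limit}, and then checking that what results is exactly the underlying set of the inverse limit $\varprojlim_\Delta \mathfrak{X}_\Delta$. First I would recall that the underlying set of a limit of locally topologically ringed spaces is computed as the limit of the underlying sets, so
\[
\big|\mathfrak{Trop}(X,\iota)\big| \ = \ \varprojlim_\Delta \big|\mathfrak{X}_\Delta\big|,
\]
the limit taken over extended $(\Gamma,\Sigma)$-admissible complexes $\Delta$ covering $\Trop{X}{\iota}$, with transition maps induced by refinements. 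Each $|\mathfrak{X}_\Delta|$ is the set underlying the special fiber $(\mathfrak{X}_\Delta)_s$, which decomposes as the union of the special fibers $(\mathfrak{X}_P)_s$ over the faces $\overline P$ of $\Delta$; and $(\mathfrak{X}_P)_s$ is glued from the $(\mathfrak{X}_Q)_s$ along faces $\overline Q \preceq \overline P$, matching the gluing of the $\mathfrak{U}_Q$ into $\mathfrak{U}_P$.

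The key step is the following observation. Given a compatible family $(x_\Delta) \in \varprojlim_\Delta |\mathfrak{X}_\Delta|$, there is a unique point $w \in \Trop{X}{\iota}$ such that, for every $\Delta$, the point $x_\Delta$ lies in the relative interior of $(\mathfrak{X}_{P})_s$ for the smallest face $\overline P$ of $\Delta$ whose relative interior in $\trop(Y_\Sigma)$ contains $w$. Concretely, $w$ is recovered as follows: by Lemma~\ref{lem:refinements}, the polyhedra $\overline P$ appearing as faces of some covering complex, ordered by reverse inclusion, form a directed system whose "intersection" singles out a point of $\trop(Y_\Sigma)$; compatibility of $(x_\Delta)$ forces this point to lie in $\Trop{X}{\iota}$ because each $x_\Delta$ lies on some nonempty initial degeneration. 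Once $w$ is pinned down, the cofinal subsystem of faces $\overline P \ni w$ is exactly the index category of Lemma~\ref{lemma: initial degeneration as an inverse limit} (here one uses that, by Lemma~\ref{lem:refinements}, every $(\Gamma,\Sigma)$-admissible polyhedron containing $w$ — not just those with vertex at $w$ — arises as a face of some complete locally finite covering complex, so the two limits agree), and the coherent family $(x_\Delta)$ restricts to an element of $\varprojlim_{P \ni w}(\mathfrak{X}_P)_s \cong |\init_w(X)|$. This produces the inverse map to \eqref{equation: exploded trop identification map}; conversely, a point of $|\init_w(X)|$ determines $x_\Delta$ for every $\Delta$ via the isomorphism of Lemma~\ref{lemma: initial degeneration as an inverse limit} applied to the unique face of $\Delta$ whose relative interior contains $w$, and naturality of those isomorphisms under refinement gives compatibility. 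The two constructions are mutually inverse by the uniqueness clauses in Lemma~\ref{lemma: initial degeneration as an inverse limit} and in the reconstruction of $w$.

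The main obstacle I expect is the bookkeeping around non-$\Gamma$-rational points $w$ and around the extended (non-torus) strata: one must check that for an arbitrary point $w \in \trop(O_\sigma)$ there is, for each covering complex $\Delta$, a well-defined "smallest face containing $w$ in its relative interior," that refinement maps send this face to the corresponding face for the refined complex, and that the system of such faces over all $\Delta$ is cofinal in the system of all $(\Gamma,\Sigma)$-admissible polyhedra containing $w$ used in Lemma~\ref{lemma: initial degeneration as an inverse limit}. The extended-polyhedron case additionally requires invoking the inclusions $R[M_\sigma]^{\overline P \cap \trop(O_\sigma)} \hookrightarrow R[M]^P$ and the fact, established in the proof of Lemma~\ref{lemma: initial degeneration as an inverse limit}, that monomials $\chi^u$ with $u \notin M_\sigma$ die modulo $\mathfrak{m}$, so that $(\mathfrak{X}_P)_s$ really does see only the $O_\sigma$-stratum near $w$. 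Both of these are local on $\trop(Y_\Sigma)$ and reduce, via the orbit decomposition $\trop(Y_\Sigma) = \bigsqcup_\sigma \trop(O_\sigma)$, to the case $\Sigma = \{0\}$ already treated, so the argument goes through once the combinatorial cofinality statement is set up carefully.
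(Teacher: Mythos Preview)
Your approach is essentially the same as the paper's: build the map from Lemma~\ref{lemma: initial degeneration as an inverse limit}, then verify bijectivity by tracking, for a compatible family $(x_\Delta)$, the nested family of faces $\overline P_\Delta$ in which $x_\Delta$ lands, and identifying the intersection $\bigcap \overline P_\Delta$ as the single point $w$. The paper's proof is shorter because it outsources the two structural facts you work out by hand---that disjoint faces of $\Delta$ give disjoint pieces $\mathfrak{X}_P$, $\mathfrak{X}_Q$ of the special fiber (your injectivity), and that a point of $\varprojlim_\Delta |\mathfrak{X}_\Delta|$ determines a nested decreasing sequence of admissible polyhedra shrinking to a single $w$ (your surjectivity)---to \cite[Proposition~8.8]{Gubler13}, the orbit stratification of Gubler models. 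Your more explicit bookkeeping via Lemma~\ref{lem:refinements} and the relative-interior stratification is correct and amounts to reproving the relevant consequences of that proposition; the extended-stratum discussion you flag is already absorbed into the statement of Lemma~\ref{lemma: initial degeneration as an inverse limit}, so no extra work is needed there.
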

\begin{proof}
	To construct the map \eqref{equation: exploded trop identification map}, it suffices to construct a map $|\init_{w}(X)|\lra\varprojlim_{\Delta} \big|\mathfrak{X}_{\Delta}\big|$, where the limit is over $(\Gamma,\Sigma)$-admissible covers of $\Trop{X}{\iota}$. Each such cover contains a face $\overline P$ that contains $w$, and the existence of this map then follows from Lemma~\ref{lemma: initial degeneration as an inverse limit}.
	
		By \cite[Proposition 8.8]{Gubler13}, if $\overline P$ and $\overline Q$ are disjoint faces of $\Delta$, then $\mathfrak{X}_P$ and $\mathfrak{X}_Q$ are disjoint in $\mathfrak{X}_\Delta$.  It follows that the map \eqref{equation: exploded trop identification map} is injective.  To see that \eqref{equation: exploded trop identification map} is surjective, observe that again by \cite[Proposition 8.8]{Gubler13} any point $p$ in the inverse limit $\varprojlim_{\Delta} \big|\mathfrak{X}_{\Delta} \big|$ projects into $\mathfrak{X}_{P_{i}}$ for some nested decreasing sequence of admissible polyhedra $P_{i}$ in $\trop(Y_{\Sigma})$ such that $\bigcap P_{i}$ is the single point $w$. Then $p$ is in the image of $\big|\init_{w}(X)\big|$.
\end{proof}


\section{The adic limit theorem}\label{Section-Main Theorem}


In this section, we prove Theorem~\ref{thm:adiclimit}, showing that the adic analytification of a subscheme of a toric variety is recovered as the inverse limit of the adic tropicalizations of any system of toric embeddings that satisfies the condition ($\dagger$) from the introduction.  The proof is not as direct as those of the tropical limit theorems in \cite{analytification, limits}.  We first prove a strong cofinality statement for Gubler models of a projective scheme (Theorem~\ref{theorem: 2}),  
and then deduce that adic analytifications of certain strictly affinoid  domains in analytifications of affine schemes can be obtained as inverse limits of algebraizable formal models (Corollary~\ref{cor:special-affinoid}). 
Theorem~\ref{thm:adiclimit} then follows easily.

\begin{subsection}{Cofinality of Gubler models for strictly affinoid domains}
As in the previous sections, we let $X$ denote a separated scheme of finite type over $K$. 
Recall that a formal scheme $\mathfrak{X}$ is \emph{algebraizable} if it is isomorphic to the formal completion of a flat $R$-scheme of finite type.  By construction, any formal Gubler model is algebraizable.

\begin{lemma} \label{lem:algebraizable}
If $X$ is projective, then the algebraizable formal models are cofinal in the inverse system of all formal models of $\an{X}$.
\end{lemma}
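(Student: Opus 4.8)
The plan is to reduce to the known structure theory of formal models of a projective scheme. Recall that if $X$ is projective over $K$, then $\an{X}$ is compact, so by the theorem cited in \S\ref{specializations of adic spaces} (\cite[Theorem~2.22]{Scholze12}, or equivalently the Raynaud equivalence \cite[\S8]{Bosch14}), the category of formal models of $\an{X}$ is cofiltered, and every formal model is dominated by admissible blow-ups of any fixed model. So it suffices to produce one algebraizable formal model $\mathfrak{X}_0$ and to show that every admissible blow-up of an algebraizable formal model is again algebraizable; cofinality then follows, since an arbitrary formal model $\mathfrak{X}'$ and $\mathfrak{X}_0$ have a common refinement, which (being an admissible blow-up of $\mathfrak{X}_0$, after passing to a further blow-up if necessary) is algebraizable and dominates $\mathfrak{X}'$.

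The first ingredient is existence of an algebraizable model: fix a projective embedding $X \hookrightarrow \mathbb{P}^n_K$ and take the scheme-theoretic closure $\mathscr{X}_0$ of $X$ inside $\mathbb{P}^n_R$; this is a flat, projective $R$-scheme with generic fiber $X$, and its formal completion $\mathfrak{X}_0$ along the special fiber is an admissible formal model of $\an{X}$ that is algebraizable by construction. (One could equally invoke that any formal Gubler model is algebraizable, as noted just before the lemma, once $X$ is given a toric embedding.) The second, and main, ingredient is that algebraizability is preserved under admissible formal blow-ups. Here the key point is a GAGA/algebraization statement: if $\mathscr{X}$ is a flat proper $R$-scheme with formal completion $\mathfrak{X}$, and $\mathfrak{a} \subset \mathscr{O}_{\mathfrak{X}}$ is an open coherent ideal, then $\mathfrak{a}$ is the completion of a coherent ideal sheaf $\mathscr{I} \subset \mathscr{O}_{\mathscr{X}}$ (using that $\mathscr{X} \to \Spec R$ is proper, so formal coherent sheaves algebraize, e.g.\ by formal GAGA / Grothendieck's existence theorem), and the blow-up of $\mathscr{X}$ along $\mathscr{I}$ is again a flat proper $R$-scheme whose formal completion along the special fiber is the admissible formal blow-up of $\mathfrak{X}$ along $\mathfrak{a}$. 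Flatness of the blow-up over $R$ follows after removing $\mathfrak{m}$-torsion, exactly as in the admissible blow-up construction; properness is immediate since blow-ups of proper schemes along coherent ideals are proper.

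The step I expect to be the main obstacle is the algebraization of the blow-up center and the verification that completion commutes with blow-up in this setting — that is, that the admissible formal blow-up along the open ideal $\mathfrak{a}$ really is the completion of the algebraic blow-up along $\mathscr{I}$. This requires that every open coherent ideal on $\mathfrak{X}$ arises from a coherent ideal on $\mathscr{X}$ (Grothendieck existence applies because $\mathscr{X}$ is proper over the complete local-ish base $R$; strictly speaking $R$ is not Noetherian, so one invokes the finitely-presented version of formal GAGA, e.g.\ as in \cite{Bosch14} or \cite{FujiwaraKato} for admissible formal schemes over a valuation ring), and that blowing up and $\mathfrak{m}$-adic completion commute, which is local on $\mathscr{X}$ and follows from the affine case where both operations are explicit. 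Once these two points are in hand, the inductive/cofinality argument described above closes the proof: any formal model $\mathfrak{X}'$ of $\an{X}$ admits, after admissible blow-up, a common refinement with the algebraizable model $\mathfrak{X}_0$; that refinement is an admissible blow-up of $\mathfrak{X}_0$, hence algebraizable, and it dominates $\mathfrak{X}'$, proving cofinality.
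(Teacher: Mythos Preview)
Your proposal is correct and follows essentially the same approach as the paper: construct one algebraizable model via the closure in $\mathbb{P}^n_R$, use Raynaud's theorem to dominate an arbitrary formal model by an admissible blow-up of that algebraizable model, and then algebraize the blow-up center via formal GAGA (the paper cites \cite[Theorem~2.13.8]{Abbes10} for this step, which handles the non-Noetherian concerns you flag). The paper is slightly terser about the compatibility of blow-up with completion, but the strategy and the key ingredients are the same.
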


\begin{proof}
Let $\Xc$ be an arbitrary formal model of $\an{X}$.  We must show that there is an algebraizable formal model that dominates $\Xc$.  Fix an embedding  of $X$ in the projective space $\mathbb{P}^{n}_K$. Then the closure $\mathscr{X}$ of $X$ in $\mathbb{P}^n_R$ is flat and hence of finite presentation over $R$ \cite[Corollary~3.4.7]{RaynaudGruson71}.  Since $\mathscr{X}$ is of finite presentation, its formal completion $\Xc'$ is admissible.  Because $X$ is projective, the adic space $\ad{X}$ is quasi-compact, and therefore every admissible formal model of $\ad{X}$ is {\em quasi-paracompact} in the sense of \cite[\S8.2, Definition 12]{Bosch14}. By Raynaud's theorem \cite[\S8.4, Theorem 3]{Bosch14}, this implies that there is an admissible formal blowup $\Xc'' \rightarrow \Xc'$ such that $\Xc''$ admits a morphism to $\Xc$.  The center of this admissible formal blowup is a coherent ideal sheaf on $\Xc'$ and, since $\Xc'$ is projective over $\Spf R$, this coherent sheaf is algebraizable \cite[Theorem~2.13.8]{Abbes10}.  Then $\Xc''$ is an algebraizable formal model of $\an{X}$ that dominates $\Xc$, and the lemma follows.
\end{proof}

We now state and prove a technical theorem about the Gubler models $\Xc_{\Delta}$ of projective schemes.

\begin{definition}\label{definition: adapted model}
	We will say that a formal Gubler model $\Xc_\Delta$ of $X$ is \emph{adapted} to an analytic domain $V\subset \an{X}$ if there is a subcomplex $\Delta' \subset \Delta$ such that $V$ is the Raynaud fiber $\an{\Xc_{\Delta'}}\subset \an{\Xc_{\Delta}}$.
\end{definition}

\noindent Equivalently, $\Xc_\Delta$ is adapted to $V$ if $V = \mathrm{Trop}^{-1}(|\Delta'|)$ for some subcomplex $\Delta'$ of $\Delta$.

\begin{theorem}\label{theorem: 2}
Let $X$ be projective over $K$, let $U \subset X$ be an affine open subscheme, and let 
$\{g_1, \ldots, g_n\} \subset K[U]$ be a set of generators for the coordinate ring.  Consider the strictly affinoid domain
\[
V = \big\{ x \in \an{U} \, : \, |g_\ell(x)| \leq 1, \mbox{ for }  1 \leq \ell \leq n\big\}.
\]
Then the formal Gubler models $\Xc_{(\iota,\Delta)}$ adapted to $V$ such that $U$ is the preimage under $\iota$ of a torus invariant affine open subvariety are cofinal in the inverse system of all formal models of $\an{X}$.
\end{theorem}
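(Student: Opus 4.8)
The plan is to reduce to algebraic models via Lemma~\ref{lem:algebraizable}, then to realize $V$ as the preimage of a single extended admissible cone, and finally to convert an arbitrary dominating formal blowup into a Gubler model by enlarging the toric embedding.

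First I would use Lemma~\ref{lem:algebraizable}: since the algebraizable formal models are cofinal among all formal models of $\an{X}$, it is enough to produce a Gubler model of the required type dominating the formal completion $\widehat{\mathscr Z}$ of an arbitrary flat projective $R$-scheme $\mathscr Z$ with $\mathscr Z_K=X$. Next, using the embedding constructions of \cite[Theorem~4.2]{limits} (which require $K$ algebraically closed), I would fix a toric embedding $\iota_1\colon X\hookrightarrow Y_{\Sigma_1}$ with $U=\iota_1^{-1}(U_{\sigma_1})$ and with the chart $U_{\sigma_1}$ an affine space $\mathbb A^n$ whose coordinates pull back to $g_1,\dots,g_n$; equivalently $S_{\sigma_1}$ is freely generated by characters $u_1,\dots,u_n$ with $\iota_1^{\ast}\chi^{u_\ell}=g_\ell$. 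Writing $\overline{\sigma_1}$ for the closure of $\sigma_1$ in $\trop(Y_{\Sigma_1})$, the identification $\an{\mathfrak U_{\sigma_1}}=\mathrm{Trop}^{-1}(\overline{\sigma_1})$ from \eqref{equation: Raynaud fiber as inverse image of polyhedron}, together with Gubler's description of $\an{\mathfrak U_{\sigma_1}}$ as the locus where every element of $K[S_{\sigma_1}]$ has absolute value at most $1$ and the fact that $u_1,\dots,u_n$ generate $S_{\sigma_1}$, would give $V=\mathrm{Trop}^{-1}(\overline{\sigma_1})$. By Lemma~\ref{lem:refinements} there is a complete, locally finite extended $(\Gamma,\Sigma_1)$-admissible complex $\Delta_1$ covering $\Trop{X}{\iota_1}$ in which $\overline{\sigma_1}$ is a union of faces, and then the subcomplex $\Delta_1'$ of faces contained in $\overline{\sigma_1}$ satisfies $\an{\mathfrak X_{\Delta_1'}}=\mathrm{Trop}^{-1}(\overline{\sigma_1})=V$. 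Thus $\mathfrak X_{(\iota_1,\Delta_1)}$ is already adapted to $V$, with $U$ the preimage of a torus-invariant affine chart; the only thing missing is that it need not dominate $\widehat{\mathscr Z}$.

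The heart of the argument would be to repair this without destroying adaptedness to $V$. By Raynaud's theorem (\cite[\S8.4, Theorem~3]{Bosch14}), available because $\ad{X}$ is quasi-compact, there is an admissible formal blowup $\mathfrak Y\to\mathfrak X_{(\iota_1,\Delta_1)}$ along a coherent open ideal $\mathscr I$ with $\mathfrak Y$ dominating $\widehat{\mathscr Z}$. On each affine piece $\mathrm{Spf}\,A_P$ of $\mathfrak X_{(\iota_1,\Delta_1)}$ attached to a face $P$ of $\Delta_1$, the ideal $\mathscr I$ is generated by finitely many elements of the admissible $R$-algebra $A_P$, and every such element is power bounded on the Raynaud fiber. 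I would then refine $\iota_1$ to a toric embedding $\iota\colon X\hookrightarrow Y_\Sigma$ obtained by adjoining these generators, restricted to $X$, as new characters, again using the flexibility of \cite{limits} to keep $U$ the preimage of a single torus-invariant affine chart $U_\sigma$. The crucial point is that the new characters lying over the chart of $U$ restrict to functions power bounded on $V$; hence every character of $U_\sigma$ is $\le 1$ on $V$, and the argument of the previous paragraph still yields $V=\mathrm{Trop}^{-1}(\overline\sigma)$ for the new cone $\sigma$. Over $Y_\Sigma$ the pullback of $\mathscr I$ is a monomial ideal, so by the standard correspondence between monomial blowups of toric schemes and polyhedral subdivisions, its blowup is the Gubler model $\mathfrak X_{(\iota,\Delta)}$ associated to a refinement $\Delta$ of the pullback of $\Delta_1$; by Lemma~\ref{lem:refinements} and \cite{ColesFriedenberg23} I would take $\Delta$ complete and locally finite and still retaining $\overline\sigma$ as a union of faces.

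It then remains to verify the properties of $\mathfrak X_{(\iota,\Delta)}$. Adaptedness to $V$ and the description of $U$ are arranged by construction; the refinement maps give a morphism $\mathfrak X_{(\iota,\Delta)}\to\mathfrak X_{(\iota_1,\Delta_1)}$, and the universal property of blowups gives $\mathfrak X_{(\iota,\Delta)}\to\mathfrak Y$, so $\mathfrak X_{(\iota,\Delta)}$ dominates $\mathfrak Y$, hence $\widehat{\mathscr Z}$, hence the original model. The step I expect to be the main obstacle is the middle one: enlarging the toric embedding so that the blowup center becomes monomial while the identity $V=\mathrm{Trop}^{-1}(\overline\sigma)$ survives. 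This is what forces one to adjoin, over the chart of $U$, only functions that are power bounded on $V$, and to lean simultaneously on the embedding theory of \cite{limits} and on the existence of complete, locally finite admissible refinements from \cite{ColesFriedenberg23}.
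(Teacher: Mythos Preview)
Your reduction via Lemma~\ref{lem:algebraizable} and your realization of $V$ as $\mathrm{Trop}^{-1}(\overline{\sigma_1})$ are fine and match the paper's opening moves. The gap is in the ``monomialize the blowup'' step.

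The ideal $\mathscr I$ you obtain from Raynaud's theorem lives on the \emph{formal} Gubler model $\mathfrak X_{(\iota_1,\Delta_1)}$; its local generators are elements of the completed algebras $A_P=\widehat{R[M]^P/\mathfrak a}$, i.e., genuine formal power series. They are analytic functions on the Raynaud fiber, not regular functions on any affine open of the algebraic scheme $X$. But the embedding machinery you invoke---condition~$(\dagger)$ and \cite[Theorem~4.2]{limits}---only lets you adjoin elements of $K[U_i]$ as new characters. There is no operation ``restrict a section of $A_P$ to $X$'' that lands in a coordinate ring, so you cannot feed these generators into a toric embedding. Consequently the claim that ``the pullback of $\mathscr I$ is a monomial ideal'' has no content: you never produce an algebraic $\iota$ on which $\mathscr I$ is defined. (Algebraizing $\mathscr I$ via \cite[Theorem~2.13.8]{Abbes10} does not obviously help either, since $\mathfrak X_{(\iota_1,\Delta_1)}$ need not be projective over $\Spf R$.) A secondary issue: even granting algebraicity, the generators you get on a face $P$ are only power bounded on $\mathrm{Trop}^{-1}(\overline P)$, not on all of $V$, so the survival of $V=\mathrm{Trop}^{-1}(\overline\sigma)$ is not automatic.

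The paper avoids this entirely by never passing through a formal blowup. It starts with an \emph{algebraic} flat proper $R$-model $\mathscr X$ (already supplied by Lemma~\ref{lem:algebraizable}), covers it by affine opens $\mathscr U_{ij}=\Spec R[x_{ij1},\dots,x_{ijs}]/\mathfrak a_{ij}$ whose generic fibers are distinguished opens $D(f_{ij})\subset U_i$, and writes each generator as $x_{ijk}=y_{ijk}/f_{ij}^{m_k}$ with $f_{ij},y_{ijk}\in K[U_i]$. These are honest regular functions, so $(\dagger)$ produces a single $\iota$ making all of them, together with $g_1,\dots,g_n$, into characters. One then reads off extended admissible polyhedra $\overline P_{ij}$ with $\an{\mathfrak X_{P_{ij}}}=\an{(\widehat{\mathscr U}_{ij})}$ and a polyhedron $\overline P$ with $V=\mathrm{Trop}^{-1}(\overline P)$, and Lemma~\ref{lem:refinements} furnishes a common refinement $\Delta$. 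The domination $\mathfrak X_\Delta\to\widehat{\mathscr X}$ comes from gluing the tautological maps $\mathfrak X_{P_{ij}}\to\widehat{\mathscr U}_{ij}$, with no blowup or universal property needed. The moral: extract the algebraic data (the $f_{ij}$ and $y_{ijk}$) \emph{before} completing, rather than trying to algebraize a formal ideal after the fact.
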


\begin{proof}
By Lemma~\ref{lem:algebraizable}, the algebraizable models of $X$ are cofinal in the inverse system of all formal models. Thus it suffices to show that every algebraizable model is dominated by a formal Gubler model $\Xc_{(\iota,\Delta)}$ that is adapted to $V$, such that $U$ is the preimage under $\iota$ of a torus invariant affine open subvariety.

Fix an affine open cover $X = U_1 \cup \cdots \cup U_r$, with $U = U_1$.  Let $\mathscr{X}$ be a flat and proper $R$-scheme with generic fiber $X$. Note that $\mathscr{X}$ is finitely presented over $R$, by \cite[Corollary~3.4.7]{RaynaudGruson71}.  Cover $\mathscr{X}$ by finitely many affine opens.  After refinement, we may assume that this cover consists of affine opens labeled $\mathscr{U}_{ij}$ such that, for each $i$ and $j$, the generic fiber $\mathscr{U}_{ij} \times \Spec K$ is a distinguished affine open $D(f_{ij}) \subset U_i$ for some $f_{ij} \in K[U_i]$. For each $i$ and $j$, choose a presentation
	\[
	\mathscr{U}_{ij}\ =\ \Spec R[x_{ij1}, \ldots, x_{ijs}] \big/ \mathfrak{a}_{ij},
	\]
where $x_{ijk} = y_{ijk}\big/f_{ij}^{m_k}$, for some $y_{ijk} \in K[U_i]$ and $m_k \in \ZZ_{\geq 0}$. 

By \cite[Theorem~4.2]{limits}, the inverse system of all toric embeddings of $X$ satisfies ($\dagger$) with respect to any affine open cover, so we can choose a closed embedding $\iota\colon X \hookrightarrow Y_\Sigma$ such that 
\begin{itemize}
\item [{\bf (i)}] each $U_i$ is the preimage of a torus invariant affine open $U_{\sigma_i}$, for $1 \leq i \leq r$,
\item [{\bf (ii)}] each of the functions $f_{ij}$ and $y_{ijk}$ is the pullback of a character that is regular on $U_{\sigma_i}$, and
\item [{\bf (iii)}]each of the functions $g_\ell$ appearing in the statement of the theorem is the pullback of a character that is regular on $U_{\sigma_1}$, for $1 \leq \ell \leq n$.
\end{itemize}
Let $\Xc$ be the formal completion of $\mathscr{X}$. We will construct an admissible polyhedral cover $\Delta$ of $\Trop{X}{\iota}$ such that the associated Gubler model $\Xc_\Delta$ is adapted to $V$ and dominates $\Xc$. The proof involves finding admissible polyhedra $P$ and $P_{ij}$ such that $V = \an{\Xc_P}$ and $\an{(\widehat{\mathscr{U}}_{ij})} = \an{\Xc_{P_{ij}}}$ and then applying Lemma~\ref{lem:refinements}. The details are as follows.

Let $M$ be the character lattice of the dense torus in $Y_\Sigma$.  Choose $u_1, \ldots, u_n$ in $M$ such that $g_\ell$ is the pullback of the character $\chi^{u_\ell}$, which is regular on $U_{\sigma_1}$.  If these do not generate $K[U_{\sigma_1}]$, then choose additional characters $\chi^{u_{n+1}}, \ldots, \chi^{u_{n'}}$ so that $\{ \chi^{u_i} \, : \, 1 \leq i \leq n' \}$ does generate.  Let $a_i$ be the minimum of the continuous function $\val \chi^{u_i}$ on the compact space $V$.  Let $\overline P$ be the extended $(\Gamma,\Sigma)$-admissible polyhedron in $\trop(Y_\Sigma)$ given by the closure of
\[
P = \{ v \in N_\RR \, : \, \langle u_i, v \rangle \geq a_i \mbox{ for } 1 \leq i \leq n' \}.
\]
Then $V = \trop^{-1}(\overline P)$, by construction.  By \eqref{equation: Raynaud fiber as inverse image of polyhedron}, it follows that $V$ is the Raynaud fiber $\an{\Xc_P}$.

Next, choose $u_{ij}$ and $u_{ijk}$ in $M$ such that $f_{ij}$ and $y_{ijk}$ are the pullbacks of the characters $\chi^{u_{ij}}$ and $\chi^{u_{ijk}}$, respectively, each of which is regular on $U_{\sigma_i}$.  Recall that the distinguished affine open $D(f_{ij}) \subset U_i$ is the general fiber of $\mathscr{U}_{ij}$, and hence is the preimage of the torus invariant affine open $U_{\sigma_{ij}} \subset U_{\sigma_i}$ on which $\chi^{u_{ij}}$ is invertible.  In particular, the characters $\chi^{u_{ijk} - m_k u_{ij}}$ are regular on $U_{\sigma_{ij}}$.  As in the previous paragraph, we can choose additional characters to generate $K[U_{\sigma_{ij}}]$ and the valuations of all of these characters achieve their minima on the compact strictly affinoid domain $\an{(\widehat{\mathscr{U}}_{ij})}$.  Let $\overline P_{ij}$ be the closure in $\trop(Y_\Sigma)$ of the polyhedron in $N_\RR$ on which the linear functions corresponding to each of these characters is bounded below by the respective minima of valuations on $\an{(\widehat{\mathscr{U}}_{ij})}$.  Then $P_{ij}$ is $(\Gamma,\Sigma)$-admissible by construction, and there is a natural morphism
\[
\varphi_{ij} \colon \Xc_{P_{ij}} \rightarrow \Xc,
\]
inducing an identification $\an{\Xc_{P_{ij}}} \xrightarrow{\sim} \an{(\widehat{\mathscr{U}}_{ij})}$ on the Raynaud fibers.  Since $\mathscr{X}$ is proper over $R$, we have that $\{\an{(\widehat{\mathscr{U}}_{ij})}\}$ covers $\an{X}$ and hence $\{\overline P_{ij}\}$ covers $\Trop{X}{\iota}$.

By Lemma~\ref{lem:refinements}, there is an extended $(\Gamma,\Sigma)$-admissible polyhedral complex $\Delta$ such that $\overline P$ and each of the $\{\overline P_{ij}\}$ is a union of faces of $\Delta$.  Moreover, since the $\{ \overline P_{ij} \}$ cover $\Trop{X}{\iota}$, we may assume that each face of $\Delta$ is contained in some $\overline P_{ij}$, without changing the Gubler model $\Xc_\Delta$.  Then each inclusion $\overline Q \subset \overline P_{ij}$, for $Q \in \Delta$ induces morphisms of formal schemes
\[
\Xc_Q \longrightarrow \Xc_{P_{ij}} \xrightarrow{\varphi_{ij}} \Xc,
\]
which glue together to give $\Xc_\Delta \rightarrow \Xc$, a morphism of formal models of $\an{X}$, as required. Here, we use that the recession cone of each  $P_{ij}$ is in $\Sigma$ to ensure that the generic fiber of $\Xc_\Delta$ is $\an{X}$, and not some modification associated to a toric blowup of $Y_\Sigma$.
\end{proof}

\begin{corollary}  \label{cor:special-affinoid}
Let $U$ be an affine scheme of finite type over $K$, with $\{ g_1, \ldots, g_n \}$ a generating set for the coordinate ring $K[U]$.  Let $V \subset \an{U}$ be the strictly affinoid domain 
\[
V = \{ x \in \an{U} \, : \, |g_\ell(x)| \leq 1 \mbox{ \ for \ } 1 \leq \ell \leq n\},
\]
and let $\cS_V$ be the inverse system consisting of all formal models of $V$ of the form $\mathfrak{U}_{(\iota, \Delta)}$, where $\iota$ is a closed embedding of $U$ into an affine toric variety.  Then the pair $(\ad{V},\mathscr{O}^{\circ}_{\ad{V}})$ is naturally isomorphic to $\varprojlim_{\cS_V} (\mathfrak{U}_{(\iota, \Delta)},\mathscr{O}^{\circ}_{\ad{V}})$.
\end{corollary}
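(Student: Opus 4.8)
The plan is to reduce the statement to the case of a projective scheme containing $U$ as an affine open, and then apply Theorem~\ref{theorem: 2} together with the description of quasi-compact adic spaces as limits of formal models recalled in \S\ref{specializations of adic spaces}. First I would embed $U$ as an affine open in a projective $K$-scheme $X$; this is possible since $U$ is affine of finite type over $K$. I would then observe that the strictly affinoid domain $V \subset \an{U} \subset \an{X}$ is exactly of the form treated in Theorem~\ref{theorem: 2}, with the same generating set $\{g_1,\dots,g_n\}$ of $K[U]$. By that theorem, the formal Gubler models $\Xc_{(\iota,\Delta)}$ that are adapted to $V$, with $U$ the preimage under $\iota$ of a torus invariant affine open, are cofinal in the inverse system of all formal models of $\an{X}$. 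For each such $\Xc_{(\iota,\Delta)}$, being adapted to $V$ means there is a subcomplex $\Delta' \subset \Delta$ with $\an{\Xc_{\Delta'}} = V$; restricting to $\Delta'$ gives a formal model $\mathfrak{U}_{(\iota,\Delta')}$ of $V$ of precisely the type appearing in $\cS_V$.

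Next I would show that passing from $\{\Xc_{(\iota,\Delta)}\}$ to $\{\mathfrak{U}_{(\iota,\Delta')}\}$ preserves cofinality. The key point is that restriction to the open subcomplex $\Delta' \subset \Delta$ is compatible with refinements: a refinement $\Delta_2$ of $\Delta_1$ that is adapted to $V$ restricts to a refinement of the corresponding subcomplexes over $V$, and any formal model $\mathfrak{U}$ of $V$ of the form $\mathfrak{U}_{(\iota,\Delta')}$ arises by restriction from a Gubler model of $\an{X}$ adapted to $V$ — one simply completes $\Delta'$ to a complex covering all of $\Trop{X}{\iota}$ using Lemma~\ref{lem:refinements}. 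Conversely, any formal model $\mathfrak V$ of $V$ in $\cS_V$ is dominated by some $\mathfrak{U}_{(\iota,\Delta')}$: extend the ambient toric embedding and the complex to one covering $\Trop{X}{\iota}$ and apply Theorem~\ref{theorem: 2} to the resulting formal model of $\an X$. Hence $\cS_V$ contains a cofinal subsystem consisting of restrictions of Gubler models of $\an{X}$ adapted to $V$, and this subsystem is itself cofinal in the system of \emph{all} formal models of $V$.

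Finally I would invoke the isomorphism
\[
(\ad{V},\mathscr{O}^{\circ}_{\ad{V}}) \xrightarrow{\ \sim\ } \varprojlim \mathfrak V'
\]
in the category of locally topologically ringed spaces, where the limit is over all formal models $\mathfrak V'$ of $V \cong \an{\mathfrak V'}$ (this is \cite[Theorem~2.22]{Scholze12}, valid because $V$ is a compact, hence quasi-compact, analytic domain so $\ad V$ is quasi-compact). Since a limit over an inverse system is computed by any cofinal subsystem, and the specialization morphisms $\mathrm{sp}_{\mathfrak V'}$ carry $\mathscr{O}^{\circ}_{\ad V}$ to $\mathscr{O}_{\mathfrak V'}$, restricting to the cofinal subsystem $\{\mathfrak{U}_{(\iota,\Delta)}\}$ identified above yields the desired isomorphism $(\ad{V},\mathscr{O}^{\circ}_{\ad{V}}) \cong \varprojlim_{\cS_V}(\mathfrak{U}_{(\iota,\Delta)},\mathscr{O}^{\circ}_{\ad V})$.

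The main obstacle I anticipate is the bookkeeping in the second paragraph: one must check carefully that the operation ``restrict a Gubler model of $\an X$ adapted to $V$ to the subcomplex over $V$'' is functorial for refinements and genuinely lands in $\cS_V$ with the right notion of morphism, and that \emph{every} formal model in $\cS_V$ (not merely the Gubler ones) is dominated within this subsystem — this requires feeding an arbitrary $\mathfrak V \in \cS_V$ back through Theorem~\ref{theorem: 2} via a completion of its defining complex, keeping track of the recession cone condition so that the generic fiber remains $\an X$ rather than a toric modification.
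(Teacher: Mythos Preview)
Your approach is essentially the same as the paper's: choose a projective compactification $X$ of $U$, apply Theorem~\ref{theorem: 2}, restrict adapted Gubler models to the subcomplex over $V$, and invoke \cite[Theorem~2.22]{Scholze12}.

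The one genuine gap is your assertion that ``restricting to $\Delta'$ gives a formal model $\mathfrak{U}_{(\iota,\Delta')}$ of $V$ of precisely the type appearing in $\cS_V$.'' This is not automatic. The system $\cS_V$ consists of Gubler models for closed embeddings of $U$ into \emph{affine} toric varieties, so the relevant complex must be $(\Gamma,\sigma)$-admissible, where $\sigma$ is the single cone with $U = \iota^{-1}(U_\sigma)$. But the subcomplex $\Delta' \subset \Delta$ produced by Theorem~\ref{theorem: 2} is only $(\Gamma,\Sigma)$-admissible for the full fan $\Sigma$ of the toric variety containing $X$; faces of $\Delta'$ may well have recession cones in $\Sigma$ that are not faces of $\sigma$, so $\mathfrak{X}_{\Delta'}$ need not lie in $\cS_V$. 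The paper closes this gap by observing that $\trop(V)$ is a compact subset of $\trop(U_\sigma)$, and using this to refine $\Delta'$ to a $(\Gamma,\sigma)$-admissible subcomplex $\Delta''$ still covering $\trop(V)$; then $\mathfrak{X}_{\Delta''}$ is in $\cS_V$ and dominates $\mathfrak{X}_{\Delta'}$.

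Your ``main obstacle'' paragraph gestures at recession cones but frames the worry as keeping the generic fiber equal to $\an{X}$ rather than a toric modification. That is a concern at the level of $X$ and the full complex $\Delta$; the actual missing step is at the level of $U$ and the subcomplex $\Delta'$, namely passing from $(\Gamma,\Sigma)$-admissibility to $(\Gamma,\sigma)$-admissibility so that the restricted model genuinely comes from an embedding of $U$ into an affine toric variety.
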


\begin{proof}
Choose a projective compactification $X$ of $U$.  Recall from \S\ref{specializations of adic spaces} that $\ad{X}$ is the projective limit of the inverse system of formal models of $\an{X}$.  By Theorem~\ref{theorem: 2}, the formal Gubler models $\Xc_{(\iota,\Delta)}$ that are adapted to $V$ and such that $U$ is the preimage under $\iota$ of a torus invariant affine open subvariety $U_\sigma$ are cofinal in the inverse system of all formal models of $\an{X}$.  Hence $\ad{V}$ is isomorphic to the inverse limit of formal models $\mathfrak{X}_{\Delta'}$ of $V$ that appear as formal Zariski opens $\mathfrak{X}_{\Delta'}\subset\Xc_{(\iota,\Delta)}$ arising from subcomplexes $\Delta' \subset \Delta$, as in Definition \ref{definition: adapted model}. We claim that each of these formal models is dominated by a model in $\cS_V$.  To see this, choose such a model. A priori, some of the faces of $\Delta$ may have recession cones that are not faces of $\sigma$, but are rather cones in some larger fan $\Sigma$, associated to the toric variety in which $X$ is embedded. However, using the fact that $\trop(V)$ is a compact subset of $\trop(U_\sigma)$, we may refine $\Delta$ so that $\trop(V)$ is covered by a $(\Gamma,\sigma)$-admissible subcomplex $\Delta''$ of the induced refinement of $\Delta'$.  Then $\Xc_{\Delta''}$ is in $\cS_V$ and dominates $\Xc_{\Delta'}$, as required.
\end{proof}

\begin{proof}[{\it Proof of Theorem \ref{thm:adiclimit}}]
	Let $X$ be a separated finite type $K$-scheme, and let $\Sc$ be a system of toric embeddings of $X$ satisfying condition ($\dagger$) of \S\ref{section: introduction} with respect to an affine open cover $X = U_{1} \cup \cdots \cup U_{r}$. We want to show that the induced map
	\begin{equation}\label{equation: canonical map for proof of admic limit theorem}
	(\ \!\ad{X},\mathscr{O}_{X^{\text{ad}}}^{\circ}) \lra \varprojlim_{\mathcal S} \big(\ \!\fTrop{X}{\iota},\ \!\mathscr{O}_{\fTrop{X}{\iota}}\ \!\big)
	\end{equation}
is an isomorphism of locally topologically ringed spaces.
	
	Consider a single affine open $U = U_{i}$ in our chosen cover. Choose a generating set $g_1, \ldots, g_n$ for the coordinate ring $K[U_i]$.  Let $V \subset \an{U}$ be the strictly affinoid domain
	\[
V = \big\{ x \in \an{U} \, : \, |g_\ell(x)| \leq 1, \mbox{ for }  1 \leq \ell \leq n\big\}.
\]
By Corollary~\ref{cor:special-affinoid}, the ringed space $\big(\ad{V},\mathscr{O}^{\circ}_{\ad{V}}\big)$ is naturally isomorphic to the inverse limit of its formal models arising as $\mathfrak{U}_{(\jmath, \Delta_0)}$, where $\jmath \colon U \hookrightarrow U_\tau$ is a closed embedding into an affine toric variety and $\Delta_0$ is an extended $(\Gamma, \tau)$-admissible polyhedral complex such that $\trop^{-1}(|\Delta_0|) = V$.  (Here, we use $\tau$ also to denote the fan consisting of its faces.)  We fix one such model $\mathfrak{U}_{(\jmath, \Delta_0)}$.

Replacing the generating set $\{ g_1, \ldots, g_n\}$ in the description above by $\{a g_1, \ldots, ag_n \}$, for $a \in K^*$ with $|a| \ll 1$, we can cover $\an{U}$ by strictly affinoid domains of this form.
Since the analogous statement holds for each of the affine opens $U_1, \ldots, U_r$ and these cover $X$, to prove the theorem it will suffice to show that there is a Gubler model of $X$ associated to an embedding $\iota \in \cS$ that is adapted to $V$, and such that the induced formal model of $V$ dominates $\mathfrak{U}_{(\jmath, \Delta_0)}$.

Let $h_1, \ldots, h_s$ be the respective pullbacks to $U$ of characters $\chi^{u_1}, \ldots, \chi^{u_s}$ that generate $K[U_\tau]$.  By condition ($\dagger$) there is an embedding $\iota \colon X \hookrightarrow Y_\Sigma$ in  $\cS$ such that $U = \iota^{-1}(U_\sigma)$ for some torus invariant affine open $U_\sigma \subset Y_\Sigma$ and $h_1, \ldots, h_s$ are the pullbacks of characters that are regular on $U_\sigma$.  The choice of such characters that are regular on $U_\sigma$ induces a toric morphism $\pi \colon U_\sigma \rightarrow U_\tau$ that commutes with the embeddings of $U$.

Let $\overline P_0$ be a face of $\Delta_0$.  The preimage $\overline{Q}_0 = \trop(\pi)^{-1}(\overline P_0)$ in $\trop(U_\sigma)$ is not necessarily $(\Gamma, \sigma)$-admissible, since the recession cone of $Q_0$ is not necessarily a face of $\sigma$.  Note, however, that the recession cone of $Q_0$ is the preimage of a face of $\tau$, and hence cut out by some supporting hyperplanes of the preimage of $\tau$.  Since $\sigma$ maps into $\tau$, these hyperplanes pullback to supporting hyperplanes of $\sigma$.  Moreover, since $V$ is compact, the continuous functions $\val \chi^{u_i}$ achieve their respective minima $a_i$.  Let $Q$ be the polyhedron obtained by intersecting $Q_0$ with the halfspaces $\langle u_i, v \rangle \geq a_i$, for $1 \leq i \leq s$.  Thus, the recession cone of $Q$ is the intersection of $\sigma$ with some of its supporting hyperplanes, and hence $Q$ is $(\Gamma, \sigma)$-admissible. Then  
$\trop^{-1}(\overline Q) \subset \an{X}$ is equal to $\trop^{-1}(\overline P)$, and there is a naturally induced morphism of formal models $\mathfrak{U}_{Q} \rightarrow \mathfrak{U}_P$.

Performing the above procedure for each face of $\Delta_0$, we arrive at a collection of extended $(\Gamma, \Sigma)$-admissible polyhedra $\overline Q_1, \ldots, \overline Q_r$ with morphisms of formal models $\mathfrak{U}_{Q_i} \rightarrow \mathfrak{U}_{\Delta_0}$ and
\[
V = \trop^{-1}(\overline Q_1 \cup \cdots \cup \overline Q_r).
\]
By Lemma~\ref{lem:refinements}, there is an extended $(\Gamma, \Sigma)$-admissible complex $\Delta$ that covers $\Trop{X}{\iota}$ such that each $\overline Q_i$ is a union of faces of $\Delta$.  Then the Gubler model $\mathfrak X_\Delta$ is adapted to $V$ and the induced formal model dominates $\mathfrak{U}_{\Delta_0}$, as required.
\end{proof}	

\subsection{Gubler models of compact analytic domains}  We now extend our results on cofinality of Gubler models from projective schemes to compact analytic domains in subschemes of toric varieties, proving Theorem~\ref{theorem: cofinality of Gubler models}.  We begin with an algebraic lemma.
	
\begin{lemma}\label{lemma: presence of inverse}
	Let $\mathfrak{X}$ be an admissible formal $R$-scheme and let $g \in \Gamma(\mathfrak{X}, \mathscr{O}_{\mathfrak X})$ be a function whose restriction to the adic fiber is invertible in $\Gamma(\ad{\mathfrak{X}},\mathscr{O}^{\circ}_{\!\!X^{\text{ad}}})$.  Then $g^{-1} \in \mathscr{O}_ {\mathfrak{X}}$.
	\end{lemma}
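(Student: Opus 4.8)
The claim is that $g$ is invertible in $\Gamma(\mathfrak{X},\mathscr{O}_{\mathfrak{X}})$, and I would begin by noting that this is a local question on $\mathfrak{X}$: the hypothesis restricts to any open formal subscheme, and local inverses agree on overlaps and so glue. Thus the plan is to reduce to the affine case $\mathfrak{X}=\Spf A$ with $A$ an admissible $R$-algebra, $\ad{\mathfrak{X}}=\Spa(A_K,A_K^{\circ})$ where $A_K = K\otimes_R A$, and to prove: if $g\in A$ has an inverse $h$ lying in $A_K^{\circ}=\Gamma(\ad{\mathfrak{X}},\mathscr{O}^{\circ}_{\ad{\mathfrak{X}}})$, then $g\in A^{\times}$.

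The main idea is to play this hypothesis off against the specialization morphism $\mathrm{sp}_{\mathfrak{X}}\colon(\ad{\mathfrak{X}},\mathscr{O}^{\circ}_{\ad{\mathfrak{X}}})\to(\mathfrak{X},\mathscr{O}_{\mathfrak{X}})$ recalled in \S\ref{specializations of adic spaces}. Two inputs are needed: first, that $\mathrm{sp}_{\mathfrak{X}}$ is a morphism of \emph{locally} topologically ringed spaces, so that its stalk maps are local homomorphisms of local rings, which is part of the setup there; and second, that $\mathrm{sp}_{\mathfrak{X}}$ is surjective on points --- the classical surjectivity of the reduction map, for which it suffices to know surjectivity of the Berkovich reduction $\an{\mathfrak{X}}\to\mathfrak{X}$ (see e.g. \cite{Bosch14}), since $\an{\mathfrak{X}}$ maps to $\ad{\mathfrak{X}}$ compatibly with specialization. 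Granting these, write $\tilde{g}$ for the image of $g$ under $\mathrm{sp}_{\mathfrak{X}}^{\sharp}$, that is, the restriction of $g$ to the adic fiber; this is a unit in $\Gamma(\ad{\mathfrak{X}},\mathscr{O}^{\circ}_{\ad{\mathfrak{X}}})$ by hypothesis. For each $z\in\mathfrak{X}$, choose $y\in\ad{\mathfrak{X}}$ with $\mathrm{sp}_{\mathfrak{X}}(y)=z$; the stalk map $\mathscr{O}_{\mathfrak{X},z}\to\mathscr{O}^{\circ}_{\ad{\mathfrak{X}},y}$ is local and carries the germ of $g$ to the germ of $\tilde{g}$, which is a unit and hence not in $\mathfrak{m}_y$. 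By locality the germ of $g$ is not in $\mathfrak{m}_z$, so it is a unit in $\mathscr{O}_{\mathfrak{X},z}$. As $z$ was arbitrary, multiplication by $g$ is an automorphism of the sheaf $\mathscr{O}_{\mathfrak{X}}$ (trivial kernel since units are non-zero-divisors stalkwise, and surjective since surjective on all stalks), whence $g^{-1}\in\Gamma(\mathfrak{X},\mathscr{O}_{\mathfrak{X}})$.

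I do not expect a genuine obstacle: the only delicate point is the bookkeeping that identifies ``restriction of $g$ to the adic fiber'' with $\mathrm{sp}_{\mathfrak{X}}^{\sharp}(g)$ and that records that passing from $\mathscr{O}$ to the subsheaf $\mathscr{O}^{\circ}$ on the adic side still yields a morphism of \emph{locally} ringed spaces, which is exactly what \S\ref{specializations of adic spaces} provides. If one prefers a purely ring-theoretic route, power-boundedness of $h$ forces $|g(x)|\geq 1$ at every point $x$ of $\an{\mathfrak{X}}$; so if the image $\bar{g}$ of $g$ in the special fiber $A/\mathfrak{m}A$ vanished at a point $\xi$, then choosing (by surjectivity of reduction) a point $x$ of $\an{\mathfrak{X}}$ reducing to $\xi$ and using that the reduction of $g(x)$ equals the image of $\bar{g}(\xi)$ under the residue-field inclusion $\kappa(\xi)\hookrightarrow\widetilde{\kappa(x)}$ would give $|g(x)|<1$, a contradiction. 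Hence $\bar{g}$ is a unit in $A/\mathfrak{m}A$; lifting a relation $g h_0\equiv 1$ modulo $\mathfrak{m}A$ and observing that every element of $\mathfrak{m}A$ is topologically nilpotent in the complete, separated ring $A$ then shows $g\in A^{\times}$.
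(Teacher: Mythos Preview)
Your proof is correct, and it takes a genuinely different route from the paper's.

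The paper reduces to the affine case $\mathfrak{X}=\Spf A$ just as you do, observes that $g^{-1}\in A_K^{\circ}$, and then invokes \cite[Proposition~2.12]{GublerRabinoffWerner17}, which says that $A_K^{\circ}$ is an integral extension of $A$. From a monic relation $(g^{-1})^{n}+c_{n-1}(g^{-1})^{n-1}+\cdots+c_{0}=0$ with $c_i\in A$, multiplying through by $g^{n-1}$ yields the explicit formula $g^{-1}=-c_{n-1}-\cdots-c_{0}g^{n-1}\in A$.

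Your argument instead exploits the geometry of the specialization morphism: surjectivity of $\mathrm{sp}_{\mathfrak{X}}$ together with locality of its stalk maps forces $g$ to be a unit in every stalk $\mathscr{O}_{\mathfrak{X},z}$, whence a global unit by a standard sheaf argument. This is entirely self-contained given the properties recorded in \S\ref{specializations of adic spaces} and the classical surjectivity of reduction; it avoids importing the integrality result from \cite{GublerRabinoffWerner17}. The paper's approach, by contrast, gives an explicit expression for $g^{-1}$ and is arguably shorter once one is willing to cite that integrality statement. Your alternative ``ring-theoretic route'' in the final paragraph is also sound, though the lifting step (invertibility modulo $\mathfrak{m}A$ implies invertibility in the complete ring $A$) deserves a moment's care when $\mathfrak{m}$ is not finitely generated; it goes through because any finite sum in $\mathfrak{m}A$ lies in $cA$ for some $c\in\mathfrak{m}$, hence is topologically nilpotent.
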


\begin{proof}
	It suffices to consider the case where $\mathfrak{X} = \Spf A$ is affine.  Since the restriction of $g$ to the adic fiber is invertible in $\Gamma(\ad{\mathfrak{X}},\mathscr{O}^{\circ}_{\!\!X^{\text{ad}}})$, its inverse must be power bounded, i.e., we have $g^{-1} \in A_K^\circ$.  It remains to show that $g^{-1}$ is in the subring $A \subset A_K^\circ$.
	
	By \cite[Proposition 2.12]{GublerRabinoffWerner17}, the power bounded ring $A_K^\circ$ is an integral extension of $A$. Therefore, $g^{-1}$ must satisfy an identity $(g^{-1})^{n}+c_{n-1}(g^{-1})^{n-1}+\cdots+c_{0}=0$ with coefficients $c_i \in A$.  Multiplying by $g^{n-1}$, gives $g^{-1}=-c_{n-1}-\cdots-c_{0}g^{n-1}$.  In particular, $g^{-1}$ is in $A$, as required.
\end{proof}

This lemma allows us to construct formal Gubler models that are adapted to certain rational domains, as follows.

\begin{proposition}\label{proposition: Theorem 1.3 for distinguished opens}
	Let $X$ be a separated $K$-scheme of finite type, let $\mathcal{S}$ be an inverse system of toric embeddings of $X$ satisfying $(\dagger)$, and let $\mathfrak{X}_{\Delta}$ be the formal Gubler model associated to a pair $(\iota,\Delta)$, for $\iota$ in $\mathcal{S}$.  Let $P \in \Delta$ be an admissible polyhedron and let $\mathfrak{U}_P \big \langle \frac{1}{g} \big \rangle$ be the distinguished formal open subscheme of $\mathfrak{U}_P$ associated to some $g \in \widehat{R[M]^P}$.  Then there is a pair $(\iota', \Delta')$ with $\iota'$ in $\mathcal{S}$ whose associated formal Gubler model $\mathfrak{X}_{\Delta'}$ is adapted to $\an{\mathfrak{U}_P \big \langle \frac{1}{g} \big \rangle}$ and dominates $\mathfrak{X}_\Delta$.
\end{proposition}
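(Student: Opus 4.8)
The plan is to reduce, via a truncation, to the case that $g$ is a finite Laurent polynomial, and then to use condition $(\dagger)$ to enlarge the toric embedding so that this polynomial becomes a single monomial on a suitable affine cover; once $g$ is monomial, the rational domain $\an{\mathfrak{U}_P\big\langle\frac{1}{g}\big\rangle}$ is cut out by a polyhedral condition and is therefore $\mathrm{Trop}^{-1}$ of a subcomplex. For \textbf{Step 1}, note that $\widehat{R[M]^P}$ is the $\mathfrak{m}$-adic completion of the finitely generated $R$-algebra $R[M]^P$ (finite generation by \cite[Proposition~6.6]{Gubler13}), so I would choose a finite Laurent polynomial $g_0 = \sum_i a_i\chi^{u_i}\in R[M]^P$ with $g-g_0\in\mathfrak{m}\,\widehat{R[M]^P}$. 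Since $g$ and $g_0$ have the same reduction, $g_0 = g\big(1+\tfrac{1}{g}(g_0-g)\big)$ is a unit in the $\mathfrak{m}$-adically complete ring $\widehat{R[M]^P}\big\langle\frac{1}{g}\big\rangle$, and symmetrically, so $\mathfrak{U}_P\big\langle\frac{1}{g}\big\rangle = \mathfrak{U}_P\big\langle\frac{1}{g_0}\big\rangle$ as formal subschemes of $\mathfrak{U}_P$ — and likewise after passing to $\mathfrak{X}_P$, the closure of $X$ in $\mathfrak{U}_P$. Each monomial $a_i\chi^{u_i}$ lies in $R[M]^P$, hence $\chi^{u_i}\in K[S_{\sigma_P}]$; in particular $g_0$ restricts to a regular function on the affine open $U:=\iota^{-1}(U_{\sigma_P})$ of $X$, and, as $g_0$ is power bounded on $\mathrm{Trop}^{-1}(\overline P)$, the domain to be realized is $V := \an{\mathfrak{X}_P\big\langle\frac{1}{g_0}\big\rangle} = \{x\in\mathrm{Trop}^{-1}(\overline P)\cap\an{X}:|g_0(x)|=1\}$.

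\textbf{Step 2} makes $g_0$ a monomial. Let $X=U_1\cup\cdots\cup U_r$ be the affine cover attached to $(\dagger)$. For each $j$, cover the open set $U\cap U_j\subset U_j$ by finitely many principal affine opens $D_{U_j}(f_{j\ell})\subseteq U$ of $U_j$ and write $g_0|_{D_{U_j}(f_{j\ell})}=p_{j\ell}/f_{j\ell}^{\,m_{j\ell}}$ with $p_{j\ell}\in K[U_j]$; these opens cover $U$. Apply $(\dagger)$ to the finite sets $\{f_{j\ell},p_{j\ell}\}_\ell\subset K[U_j]$ to obtain $\iota_0\colon X\hookrightarrow Y_{\Sigma_0}$ in $\Sc$ with $U_j=\iota_0^{-1}(U_{\sigma_{0,j}})$ and $f_{j\ell},p_{j\ell}$ pulled back from characters regular on $U_{\sigma_{0,j}}$; then $D_{U_j}(f_{j\ell})$ is the preimage of the torus-invariant open of $U_{\sigma_{0,j}}$ on which that character is invertible, and there $g_0$ is the pullback of a character. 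As these opens cover $U$, the function $|g_0|$ on $\mathrm{Trop}^{-1}(\overline P)\cap\an{X}\subset\an{U}$ depends only on $\trop$, so $\{|g_0|=1\}$ becomes $\trop$-saturated — this is the point where a sum of monomials would fail. Since $\Sc$ is an inverse system, I would then choose $\iota'\in\Sc$ dominating both $\iota$ and $\iota_0$; characters pull back to characters, so $g_0$ retains its monomial description for $\iota'$, and there is a morphism of toric embeddings $\iota'\to\iota$ inducing a toric morphism $f\colon Y_{\Sigma'}\to Y_\Sigma$.

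\textbf{Step 3} assembles the complex. Transport $\overline P$ through $f$ to an extended $(\Gamma,\Sigma')$-admissible polyhedron $\overline Q$, repairing the recession cone by intersecting $\trop(f)^{-1}(\overline P)$ with finitely many halfspaces $\langle u_i,v\rangle\ge a_i$ — with the $\chi^{u_i}$ generating the coordinate rings of the relevant torus-invariant opens of $Y_{\Sigma'}$ and $a_i=\min\{\val\,\chi^{u_i}(x):x\in\mathrm{Trop}^{-1}(\overline P)\cap\an{X}\}$ (finite, since $\mathrm{Trop}^{-1}(\overline P)$ is compact) — exactly as in the proof of Theorem~\ref{thm:adiclimit}; this gives $\mathrm{Trop}^{-1}(\overline Q)\cap\an{X}=\mathrm{Trop}^{-1}(\overline P)\cap\an{X}$. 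Then $S:=\{v\in\overline Q:|g_0|\equiv 1\text{ on }\trop^{-1}(v)\}$ is a face of $\overline Q$, hence again extended $(\Gamma,\Sigma')$-admissible, and $\mathrm{Trop}^{-1}(S)\cap\an{X}=V$. By Lemma~\ref{lem:refinements}, take a complete locally finite extended $(\Gamma,\Sigma')$-admissible complex having $\overline Q$ and $S$ among unions of its faces, and refine it, as in \S\ref{subsection: adic tropicalization as an inverse limit of formal models}, to a cover $\Delta'$ of $\Trop{X}{\iota'}$ such that $\trop(f)$ carries each face of $\Delta'$ into a face of $\Delta$. Then $\mathfrak{X}_{\Delta'}$ dominates $\mathfrak{X}_\Delta$ by the functoriality of \S\ref{subsection: adic tropicalization as an inverse limit of formal models}, and the subcomplex $\Delta''=\{Q\in\Delta':Q\subseteq S\}$ has $|\Delta''|=S$, so $\an{\mathfrak{X}_{\Delta''}}=\mathrm{Trop}^{-1}(S)\cap\an X=V$; thus $\mathfrak{X}_{\Delta'}$ is adapted to $V=\an{\mathfrak{U}_P\big\langle\frac{1}{g}\big\rangle}$.

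The crux is Step 2. The affine open $U=\iota^{-1}(U_{\sigma_P})$ on which $g_0$ naturally lives need not be one of the sets $U_1,\dots,U_r$ fixed by $(\dagger)$, so $g_0$ cannot be fed into $(\dagger)$ directly; the device of covering $U$ by principal opens of the $U_j$ lying inside $U$, and applying $(\dagger)$ to the numerators and denominators of $g_0$ in those charts, is exactly what turns $g_0$ into a monomial on a cover of $U$, which is precisely what makes $\{|g_0|=1\}$ a $\trop$-saturated — hence adaptedly defined — domain. The remaining work in Step 3, keeping recession cones inside the fans and the transported complexes locally finite, is routine but must be carried out with the care already exercised in the proof of Theorem~\ref{thm:adiclimit}.
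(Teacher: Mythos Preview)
Your overall strategy coincides with the paper's: reduce to an algebraic $g$, cover $U=\iota^{-1}(U_{\sigma_P})$ by principal opens of the $U_j$'s, apply $(\dagger)$ to turn the relevant functions into characters, and then build an admissible complex via Lemma~\ref{lem:refinements}. Your Step~1 truncation $g\mapsto g_0$ modulo $\mathfrak{m}$ is in fact cleaner than the paper's device of replacing $g$ by a rational function via quasicompactness; once $g_0\equiv g\pmod{\mathfrak m}$, the equality $\mathfrak{U}_P\langle\tfrac1g\rangle=\mathfrak{U}_P\langle\tfrac{1}{g_0}\rangle$ is immediate, whereas the paper must later invoke Lemma~\ref{lemma: presence of inverse} to check that $g^{-1}$ lies in the structure sheaf of the new model.

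There is, however, a genuine gap in Step~3. You produce a \emph{single} extended $(\Gamma,\Sigma')$-admissible polyhedron $\overline Q$ with $\mathrm{Trop}_{\iota'}^{-1}(\overline Q)\cap\an X=\mathrm{Trop}_\iota^{-1}(\overline P)\cap\an X$, and then claim that $S=\{v\in\overline Q:|g_0|\equiv1\text{ on }\trop^{-1}(v)\}$ is a \emph{face} of $\overline Q$. Neither assertion holds in general. The open set $U$ is not the preimage of a single torus-invariant open of $Y_{\Sigma'}$; it is only covered by the $D_{U_j}(f_{j\ell})$, each of which is such a preimage. When you ``repair the recession cone'' by intersecting with halfspaces $\langle u_i,v\rangle\ge a_i$ for characters $\chi^{u_i}$ generating the coordinate rings of \emph{several} torus-invariant opens, some of these characters are not regular on all of $\mathrm{Trop}_\iota^{-1}(\overline P)\cap\an X$, so the minima $a_i$ can be $-\infty$ and the intersection does not force the recession cone into $\Sigma'$; and if you use enough halfspaces to force it, you will typically have shrunk the preimage. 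Moreover, the character $\chi^{u'_{j\ell}}$ representing $g_0$ depends on the chart, so the locus $\{|g_0|=1\}$ is cut out by \emph{different} hyperplanes on different pieces and is not a single face. The correct fix---and this is exactly what the paper does---is to work chart by chart: for each $(j,\ell)$ construct a $(\Gamma,\Sigma')$-admissible polyhedron $\overline Q_{j\ell}$ with $\mathrm{Trop}_{\iota'}^{-1}(\overline Q_{j\ell})\cap\an X=\an{\mathfrak{U}_P\langle\tfrac{1}{g_0}\rangle}\cap\an{D_{U_j}(f_{j\ell})}$, and then apply Lemma~\ref{lem:refinements} to the finite collection $\{\overline Q_{j\ell}\}$ together with the pulled-back faces of $\Delta$. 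With that modification your argument goes through.
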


\begin{proof}
	Fix  a finite set of monomial generators $\{\chi^{u_1},\dots,\chi^{u_n}\}$ on $\mathscr{U}_{P}=\text{Spec}_{\ \!}K[M]^{P}$. Then $\{\chi^{u_1},\dots,\chi^{u_n},\frac{1}{g}\}$ is a set of topological generators on $\mathfrak{U}_{P}\langle\tfrac{1}{g}\rangle$. Define $U := (\mathscr{U}_{P})_{K}$, the affine algebraic generic fiber of the algebraic $R$-model $\mathscr{U}_{P}$.

Let $X = U_1 \cup \cdots \cup U_r$ be an open cover for which the inverse system $\mathcal{S}$ satisfies $(\dagger)$. Cover $U_i \cap U$ by open subsets $U_{ij}$ that are distinguished affine opens of both $U$ and $U_i$. In particular, $U_{ij} = (U_i)_{f_{ij}}$ for some $f_{ij} \in K[U_i]$.

Since $\ad{\mathfrak{U}_{P}\langle\tfrac{1}{g}\rangle}$ is a quasicompact subset of $\ad{U}$, there is a rational function $f$ on $U$ such that $|g^{-1}(x)|\le1$ if and only if $|f(x)|\le 1$ for all $x\in\an{\mathfrak{U}_{P}\langle\tfrac{1}{g}\rangle}$. Hence, we may assume $g$ is a rational function.  Write $g|_{U_{ij}} = g_{ij}/g'_{ij}$, with $g_{ij}$ and $g'_{ij}$ in $K[U_i]$.  

Choose functions $h_{ij}$ and $h'_{ij}$ in $K[U_i]$ such that $\an{\mathfrak{U}_{P}}$ is the subset of $\an{U_i}$ where $|h_{ij} / h'_{ij}| \leq 1$.

By $(\dagger)$, we can choose a toric embedding $\jmath\colon X \hookrightarrow Y_\Sigma$ in $\mathcal{S}$ such that $U_i$ is the preimage of an invariant affine open $U_{\sigma_i} \subset Y_\Sigma$ and each of the functions $f_{ij}$, $g_{ij}$, $g'_{ij}, h_{ij}, h'_{ij}$ is the pullback of a character that is regular on $U_{\sigma_i}$. Since $f_{ij}$ is a character that is regular on $U_i$, the distinguished affine open $U_{ij}$ is also the preimage of an invariant affine open $U_{\sigma_{ij}} \subset Y_\Sigma$.  

By construction, the intersection of $\an{\mathfrak{U}_P\big \langle \frac{1}{g} \big \rangle}$ with $\an{U_{ij}}$ is the preimage of an extended $(\Gamma,\Sigma)$-admissible polyhedron $\overline{Q}_{ij}$. Since $\mathcal{S}$ is an inverse system, we can then choose $\iota' \colon X \hookrightarrow Y_{\Sigma'}$ in $\mathcal{S}$ that dominates both $\iota$ and $\jmath$. The preimage of $\overline{Q}_{ij}$ is covered by finitely many $(\Gamma, \Sigma')$-admissible polyhedra obtained by intersecting with closures of translates of cones in $\Sigma'$, as is the preimage of each face of $\Delta$.

Applying Lemma~\ref{lem:refinements}, we obtain a locally finite $(\Gamma,\Sigma')$-admissible polyhedral complex $\Delta'$ that covers $\text{Trop}(X,\iota)$ and such that the preimage of each $\overline Q_{ij}$ and the preimage of each face of $\Delta$ is a union of faces of $\Delta'$. We then have a morphism of admissible formal $R$-models $\mathfrak{X}_{\Delta'}\longrightarrow\mathfrak{X}_{\Delta}$.



	Let $\Upsilon \subset \Delta'$ be the subcomplex consisting of the preimages of the $\overline Q_{ij}$. By construction, we have an isomorphism of ringed spaces
	\begin{equation}\label{equation: iso of adic fibers}
	\big(\ad{\mathfrak{X}_{\Upsilon}},\mathscr{O}^{\circ}_{\!\!\ad{\mathfrak{X}_{\Upsilon}}}\big)
	\ \xrightarrow{\ \sim\ }\ 
	\big(\ \!\ad{\mathfrak{U}_{P}\langle\tfrac{1}{g}\rangle},\mathscr{O}^{\circ}_{\!\ad{\mathfrak{U}_{P}\langle\mbox{{\smaller\smaller\smaller\smaller $\tfrac{1}{g}$}}\rangle}}\big).
	\end{equation}
To see that \eqref{equation: iso of adic fibers} extends to a morphism of admissible formal $R$-models $\mathfrak{X}_{\Upsilon}\longrightarrow\mathfrak{U}_{P}\langle\tfrac{1}{g}\rangle$, it suffices to show that each of the topological generators $\chi^{u_1},\dots,\chi^{u_n},\frac{1}{g}$ on $\mathfrak{U}_{P}\langle\tfrac{1}{g}\rangle$ is in $\mathcal{O}_{\mathfrak{X}_{\Upsilon}}$. Now $\chi^{u_i}$ is in $\mathcal{O}_{\mathfrak{X}_{\Upsilon}}$ by construction, and $g$ is invertible in $\mathcal{O}_{\mathfrak{X}_{\Upsilon}}$.  By Lemma~\ref{lemma: presence of inverse}, this implies that $g^{-1}$ is also in $\mathcal{O}_{\mathfrak{X}_{\Upsilon}}$, as required.
\end{proof}
	
\begin{lemma}\label{lem:for proof of Thm 1.3}
	Let $\Sc$ be an inverse system of toric embeddings of $X$ that satisfies $(\dagger)$, and let $V \subset \an{X}$ be a compact analytic domain with formal model $\mathfrak{V}$. For any point $x\in \ad{V}$, there exists an embedding $\iota\colon X \hookrightarrow Y_\Sigma$ in $\mathcal{S}$, an admissible polyhedral cover $\Delta_{x}$ of $\Trop{X}{\iota}$, a polyhedron $P_{x}\in \Delta_{x}$, and a function $g_{x}$ on $\mathfrak{U}_{P_{x}}$ such that the distinguished open $\mathfrak{U}_{P_{x}}\langle\tfrac{1}{g_{x}}\rangle$ comes with a morphism $\mathfrak{U}_{P_{x}}\langle\tfrac{1}{g_{x}}\rangle\lra\mathfrak{V}$ whose adic Raynaud fiber is the inclusion of an analytic domain containing $x$.
\end{lemma}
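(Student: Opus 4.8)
The plan is to use the specialization map of $\mathfrak{V}$ to locate $x$ inside a concrete affine formal chart, to shrink that chart until its generic fiber is a rational subdomain of one of the affine opens appearing in $(\dagger)$, to make the defining functions monomial by an application of $(\dagger)$, and then to read off the required Gubler distinguished open. Concretely, let $\xi := \mathrm{sp}_{\mathfrak{V}}(x)\in\mathfrak{V}$ and choose an affine formal open $\mathfrak{W}_1 = \Spf B\subseteq\mathfrak{V}$ with $\xi\in\mathfrak{W}_1$ and $B$ an admissible $R$-algebra. Since $\mathrm{sp}_{\mathfrak{V}}^{-1}(\mathfrak{W}_1)=\ad{\mathfrak{W}_1}$, the point $x$ lies in $\ad{\mathfrak{W}_1}$, and the open immersion $\mathfrak{W}_1\hookrightarrow\mathfrak{V}$ exhibits $\mathfrak{W}_1$ as a formal model of the strictly affinoid domain $W_1 := \an{\mathfrak{W}_1}\subseteq V$ equipped with a morphism to $\mathfrak{V}$.

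\textbf{Shrinking and monomializing.} Regard $x$ as a point of $\an{X}$; it lies in $\an{U_i}$ for some member $U := U_i$ of the affine cover $X=U_1\cup\cdots\cup U_r$ with respect to which $\Sc$ satisfies $(\dagger)$. Since $\an{U}$ is open in $\an{X}$ and strictly affinoid domains form a net, after possibly perturbing defining functions I can choose a rational subdomain $W$ of $\an{U}$, cut out by inequalities $|f_1|,\dots,|f_s|\leq|h|$ with $f_1,\dots,f_s,h\in K[U]$, such that $x\in W\subseteq W_1$. As $W\subseteq W_1$ is an affinoid subdomain, Raynaud's theory furnishes an admissible formal model $\mathfrak{W}$ of $W$ together with a morphism $\mathfrak{W}\to\mathfrak{W}_1=\Spf B\hookrightarrow\mathfrak{V}$. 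Now apply $(\dagger)$ to the finite set consisting of $f_1,\dots,f_s,h$ and a generating set of $K[U]$: there is an embedding $\iota\colon X\hookrightarrow Y_\Sigma$ in $\Sc$ with $U=\iota^{-1}(U_\sigma)$ for an invariant affine open $U_\sigma$, such that each of these functions is the pullback of a character regular on $U_\sigma$; in particular $K[U]$ is generated by pullbacks of such characters and each $f_j/h$ is (the pullback of) a Laurent monomial.

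\textbf{The Gubler distinguished open and the formal morphism.} With everything monomial, the conditions defining $W$ together with the lower bounds attained by the finitely many relevant characters on the compact set $W$ carve out an extended $(\Gamma,\Sigma)$-admissible polyhedron $\overline{P}_x\subseteq\trop(U_\sigma)$ and a (rational) function $g_x$ on the Gubler piece $\mathfrak{U}_{P_x}$ — the formal completion of the closure of $X$ in $\mathscr{U}_{P_x}$ — for which $W=\an{\mathfrak{U}_{P_x}\langle\tfrac{1}{g_x}\rangle}$, the factor $\langle\tfrac{1}{g_x}\rangle$ encoding the nonvanishing conditions on the boundary exactly as in the reduction to rational $g$ in the proof of Proposition~\ref{proposition: Theorem 1.3 for distinguished opens}. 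Applying the theorem of Coles and Friedenberg (as in Lemma~\ref{lem:refinements}) to the finite complex formed by $\overline{P}_x$ and its faces, I obtain a complete, locally finite extended $(\Gamma,\Sigma)$-admissible complex $\Delta_x$ covering $\Trop{X}{\iota}$ in which $P_x := \overline{P}_x$ is a face. It remains to promote the tautological identification of generic fibers to a morphism $\mathfrak{U}_{P_x}\langle\tfrac{1}{g_x}\rangle\to\mathfrak{V}$. Compatibility of morphisms with specialization and the containment $\an{\mathfrak{U}_{P_x}\langle\tfrac{1}{g_x}\rangle}=W\subseteq W_1=\an{\mathfrak{W}_1}$ force any such morphism to factor through $\Spf B$, so it suffices to lift the restriction homomorphism $B\to\mathcal{O}(W)$ to $B\to\Gamma\big(\mathfrak{U}_{P_x}\langle\tfrac{1}{g_x}\rangle,\mathscr{O}\big)$. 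The images of the generators of $B$ are power bounded on $W$, and the coordinate ring $\Gamma\big(\mathfrak{U}_{P_x}\langle\tfrac{1}{g_x}\rangle,\mathscr{O}\big)$ is the ring of power-bounded functions on $W$ — here one uses normality of Gubler's toric models together with Lemma~\ref{lemma: presence of inverse} to see that $g_x^{-1}$, hence every power-bounded function, lies in it — so the lift exists. The resulting morphism $\mathfrak{U}_{P_x}\langle\tfrac{1}{g_x}\rangle\to\Spf B\hookrightarrow\mathfrak{V}$ has adic Raynaud fiber the inclusion $W\hookrightarrow\ad{V}$ of an analytic domain containing $x$, as required.

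\textbf{Main obstacle.} The crux is the interplay between the abstract formal model $\mathfrak{V}$, whose charts bear no a priori relation to the toric geometry of $X$, and the Gubler models, which are built from monomial data. This is what forces the initial shrinking to a rational subdomain of a single $U_i$, so that $(\dagger)$ can be applied to honest algebraic functions, and it is also what must be verified at the end: that after monomialization the Gubler distinguished open genuinely recovers the ring of power-bounded functions, so that the chart of $\mathfrak{V}$ maps into it. The remaining steps are routine, modulo the standard but slightly delicate manipulations with nets of affinoid domains, Gerritzen--Grauert, and Raynaud's correspondence that are needed to produce $W$ and $\mathfrak{W}$.
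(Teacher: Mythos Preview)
There is a genuine gap at the step where you assert that $\Gamma\big(\mathfrak{U}_{P_x}\langle\tfrac{1}{g_x}\rangle, \mathscr{O}\big)$ coincides with the ring of power-bounded functions on $W$. The normality you invoke pertains to the \emph{toric} model $\mathscr{U}_{P} = \Spec{R[M]^{P}}$, but the formal scheme that must receive the morphism from $\Spf B$ is the Gubler model of $X$ itself, namely the completion of the closure $\mathscr{X}_{P_x}$ of $X$ inside $\mathscr{U}_{P_x}$, and this closure is typically not normal. Lemma~\ref{lemma: presence of inverse} delivers only $g_x^{-1}$; the inference ``hence every power-bounded function lies in it'' does not follow. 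Consequently you have not shown that the topological generators of $B$ --- which are analytic, not algebraic, functions and hence are not touched by your application of $(\dagger)$ --- land in the Gubler coordinate ring, and the morphism $\mathfrak{U}_{P_x}\langle\tfrac{1}{g_x}\rangle \to \mathfrak{V}$ remains unconstructed. Your ``Main obstacle'' paragraph correctly flags this as the crux, but the proposed verification via normality does not go through.

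The paper's proof sidesteps this difficulty by invoking Theorem~\ref{thm:adiclimit}, which has already been proved at this point. That theorem identifies $(\ad{X}, \Ooad{X})$ with $\varprojlim_{\mathcal{S}} \fTrop{X}{\iota}$, so the power-bounded stalk $\mathscr{O}^{\circ}_{\ad{X},x}$ is the filtered colimit of coordinate rings of Gubler affines $\mathfrak{U}_P\langle\tfrac{1}{g}\rangle$ arising from embeddings in $\mathcal{S}$. Each topological generator $h_i$ of the chosen affine chart $\Spf A' \subset \mathfrak{V}$ lies in this stalk, hence already lies in some such Gubler coordinate ring; a common refinement over the finitely many $h_i$ then yields the desired $\mathfrak{U}_{P_x}\langle\tfrac{1}{g_x}\rangle$ together with the morphism to $\mathfrak{V}$. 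No claim about normality or about recovering the full ring of power-bounded functions is needed.
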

\begin{proof}
	For each $x\in \mathfrak{V}^{\text{ad}}$, choose an affine open subscheme $\mathfrak{V}'=\text{Spf}_{\ \!}A'\subset\mathfrak{V}$ such that $x\in(\mathfrak{V}')^{\text{ad}}$. Fix a finite set of topological generators $\{h_{1},\dots,h_{\ell}\}$ of $A'$. Note that all generators $h_{i}$, $1\le i\le \ell$, lie in the power bounded stalk $\mathscr{O}^{\circ}_{\mathfrak{V}^{\text{ad}},x}$. By Theorem \ref{thm:adiclimit}, our hypothesis that $\Sc$ is an inverse system of toric embeddings of $X$ satisfying $(\dagger)$ implies that we have a neighborhood basis of $x$ formed by adic fibers of distinguished affine opens $\mathfrak{U}_{P}\langle\tfrac{1}{g}\rangle$, where $\overline P$ is a face of an extended $(\Gamma,\Sigma)$-admissible complex giving rise to a Gubler model of $\an{X}$.  Thus, because we have only finitely many $h_{i}$'s, we can find a single admissible formal $R$-scheme $\mathfrak{U}_{P_{x}}\langle\tfrac{1}{g_{x}}\rangle$ inside some formal Gubler model $\mathfrak{X}_{\Delta_{x}}$ of $X$ associated to an admissible pair $(\iota,\Delta_{x})$, for $\iota$ in $\mathcal{S}$, such that $x\in\mathfrak{U}_{P_{x}}\langle\tfrac{1}{g_{x}}\rangle^{\text{ad}}$, and such that each $h_{i}$ is a function on $\mathfrak{U}_{P_{x}}\langle\tfrac{1}{g_{x}}\rangle$, for $1\le i\le \ell$. By construction, this comes with a morphism $\mathfrak{U}_{P_{x}}\langle\tfrac{1}{g_{x}}\rangle\lra\mathfrak{V}$, such that $x$ is in the image of the adic Raynaud fiber of this morphism.
\end{proof}
	
\begin{proof}[{\it Proof of Theorem \ref{theorem: cofinality of Gubler models}}]
	For each $x\in V^{\text{ad}}$, construct $\mathfrak{U}_{P_{x}}\langle\tfrac{1}{g_{x}}\rangle\lra\mathfrak{V}$ as in Lemma \ref{lem:for proof of Thm 1.3}. The fact that $\ad{V}$ is quasicompact implies that we can pass to a finite cover of $\ad{V}$ by strictly affinoid domains $\ad{\mathfrak{U}_{P_{x}}\langle\tfrac{1}{g_{x}}\rangle}\subset\ad{V}$. By Proposition \ref{proposition: Theorem 1.3 for distinguished opens}, each $\mathfrak{U}_{P_{x}}\langle\tfrac{1}{g_{x}}\rangle$ has a second associated Gubler model $\mathfrak{X}_{\Upsilon_{x}}$ of $X$ with dominating morphism $\mathfrak{X}_{\Upsilon_{\!x}}\longrightarrow\mathfrak{X}_{\Delta_{x}}$, and containing a subcomplex $\Upsilon'_{\!x}\subset\Upsilon_{x}$ with restriction $\mathfrak{X}_{\Upsilon'_{\!x}}\longrightarrow\mathfrak{U}_{P_{x}}\langle\tfrac{1}{g_{x}}\rangle$ that induces an isomorphism
	$$
	\ad{\mathfrak{X}_{\Upsilon'_{\!x}}}\xrightarrow{\ \sim\ }\ad{\mathfrak{U}_{P_{x}}\langle\tfrac{1}{g_{x}}\rangle}.
	$$
Because the system of Gubler models is cofiltered, there exists a single formal Gubler model $\mathfrak{X}_{\Upsilon}$ dominating all $\mathfrak{X}_{\Upsilon'_{\!x}}$ simultaneously. Let $\Upsilon'\subset\Upsilon$ denote the subcomplex consisting of all polyhedra mapping into $\Upsilon'_{\!x}$ for some $x$. Then by construction, we have a morphism $\mathfrak{X}_{\Upsilon'}\longrightarrow\mathfrak{V}$ that restricts to an isomorphism of adic fibers $\ad{\mathfrak{X}_{\Upsilon'}}\xrightarrow{\ \sim\ }V^{\text{ad}}$.
\end{proof}

\end{subsection}


\section{The limit theorem for structure sheaves on Berkovich spaces}\label{section: The limit theorem for structure sheaves on Berkovich spaces}


Recall that a subset $V \subset \an{X}$ is an \emph{analytic domain} if each point $x \in V$ has a neighborhood of the form $V_1 \cup \cdots \cup V_n$, where each $V_i \subset V$ is strictly affinoid, and $x \in V_1 \cap \cdots \cap V_n$.  A cover of an analytic domain $V$ by analytic subdomains $V_i$ is \emph{admissible} if every point $x \in V$ has a neighborhood which is a finite union $V_1 \cup \cdots \cup V_n$, where each $V_i$ is an analytic domain in the cover, and $x \in V_1 \cap \cdots \cap V_n$.   We write $\an{X}_G$ for $\an{X}$ equipped with this $G$-topology.  Note that open subsets are analytic domains, so the $G$-topology refines the ordinary topology.  See \cite{Berkovich90, Berkovich93} for the $G$-topology on Berkovich spaces, as well as \cite[\S9.1]{BGR84} for further details on Grothendieck topologies in general, including the technical notion of \emph{slightly finer} Grothendieck topologies, which appears in the discussion below.

Note that sheaves on $\an{X}_G$ are determined by their values on strictly affinoid domains and restriction maps for inclusions of strictly affinoid subdomains.  The value of the structure sheaf $\mathscr{O}(X_G)$ on a strictly affinoid domain $\mathscr{M}(A)$ is simply the affinoid algebra $A$, and the restriction maps to affinoid subdomains are the usual ones.

\subsection{The tropical $G$-topology}  We now describe an analogous $G$-topology on tropicalizations.  Let $\iota\colon X \hookrightarrow Y_\Sigma$ be a closed embedding into a toric variety over $K$.  A \emph{polyhedral domain} in $\Trop{X}{\iota}$ is the intersection with a $(\Gamma, \Sigma)$-admissible polyhedron.  These are the analogues of strictly affinoid domains in $\an{X}$.  Then a \emph{tropical domain} $W$ is a subset of $\Trop{X}{\iota}$ in which every point $x \in W$ has a neighborhood which is a finite union $W_1 \cup \cdots \cup W_n$, where each $W_i \subset W$ is a polyhedral domain and $x \in W_1 \cap \cdots \cap W_n$.  A cover of a tropical domain $W$ by tropical subdomains $W_i$ is \emph{admissible} if every point $x \in W$ has a neighborhood which is a finite union $W_1 \cup \cdots \cup W_n$, where each $W_i$ is a tropical domain in the cover, and $x \in W_1 \cap \cdots \cap W_n$.  We write $\Trop{X}{\iota}_G$ for the tropicalization equipped with this $G$-topology.  Just as a sheaf on $\an{X}_G$ is determined by its values on strictly affinoid domains and the restriction maps between them, a sheaf on $\Trop{X}{\iota}_G$ is determined by its values on polyhedral domains and the restriction maps between them.

Note that the preimage of a polyhedral (resp. tropical) domain in $\Trop{X}{\iota}$ is a strictly affinoid (resp. analytic) domain in $\an{X}$.  Hence the pullback of an admissible cover of $\Trop{X}{\iota}$ is an admissible cover of $\an{X}$, so the tropicalization map
\[
\an{X}_G \longrightarrow \Trop{X}{\iota}_G
\]
is continuous not only in the ordinary topologies, but also with respect to the $G$-topologies on both sides.  The projections
\[
\Trop{X}{\iota'}_G \longrightarrow \Trop{X}{\iota}_G
\]
induced by morphisms of toric embeddings are likewise continuous with respect to the $G$-topologies.

The set-theoretic identification $\an{X} = \varprojlim  \Trop{X}{\iota}$, together with the $G$-topologies on the tropicalizations, induces a $G$-topology on $\an{X}$, which we call the \emph{tropical topology}.  It is the coarsest $G$-topology on $\an{X}$ with respect to which each of the projections $\an{X}\rightarrow\Trop{X}{\iota}_G$ is continuous.  

\begin{proposition}\label{proposition: G is slightly finer}
The $G$-topology on $\an{X}$ is slightly finer than the tropical topology on $\an{X}$.
\end{proposition}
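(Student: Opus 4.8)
The plan is to verify the two conditions in the definition of \emph{slightly finer} Grothendieck topologies from \cite[\S9.1]{BGR84}. Write $\mathcal{T}$ for the $G$-topology on $\an{X}$ and $\mathcal{T}^{\mathrm{trop}}$ for the tropical topology. That $\mathcal{T}$ is finer than $\mathcal{T}^{\mathrm{trop}}$ is immediate: by the discussion preceding the proposition, each projection $\an{X}_G \to \Trop{X}{\iota}_G$ is continuous for the $G$-topologies, and $\mathcal{T}^{\mathrm{trop}}$ is by construction the coarsest $G$-topology on $\an{X}$ with this property. It then remains to check (i) that every $\mathcal{T}$-admissible subset of $\an{X}$ admits a $\mathcal{T}$-admissible covering by $\mathcal{T}^{\mathrm{trop}}$-admissible subsets, and (ii) that every covering of a $\mathcal{T}^{\mathrm{trop}}$-admissible subset by $\mathcal{T}^{\mathrm{trop}}$-admissible subsets which happens to be $\mathcal{T}$-admissible is already $\mathcal{T}^{\mathrm{trop}}$-admissible.

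For (i): a $\mathcal{T}$-admissible subset is, by definition, a locally finite union of strictly affinoid domains, and this is itself a $\mathcal{T}$-admissible covering; so it suffices to $\mathcal{T}$-admissibly cover a single strictly affinoid domain $V \subset \an{X}$ by preimages $\trop_\iota^{-1}(W)$ of polyhedral domains $W \subset \Trop{X}{\iota}$, since such preimages are strictly affinoid and are $\mathcal{T}^{\mathrm{trop}}$-admissible by construction. By the Gerritzen--Grauert theorem $V$ is a finite union of rational domains, and, arguing as in the proofs of Theorem~\ref{thm:adiclimit} and Corollary~\ref{cor:special-affinoid}, each rational domain can be taken to lie in $\an{U}$ for an affine open $U \subseteq X$ and to be cut out by inequalities $|\chi^{u_\ell}| \le |\chi^{u_0}|$ ($1 \le \ell \le m$) among characters $\chi^{u_0}, \chi^{u_1}, \dots, \chi^{u_m}$ regular on a torus invariant affine open of a toric embedding $\iota \colon X \hookrightarrow Y_\Sigma$ furnished by $(\dagger)$; tracing through the definition of $\trop$ then realizes such a rational domain as $\trop_\iota^{-1}(W)$ for a polyhedral domain $W$. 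The only mild subtlety is the passage from analytic to regular defining functions, which uses density of regular functions in affinoid algebras and is handled exactly as in \cite{analytification, limits}. This gives (i).

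For (ii), the essential input is properness of the tropicalization maps: each preimage $\trop_\iota^{-1}(W)$ of a polyhedral domain is strictly affinoid, hence quasi-compact, and since $\an{X} = \varprojlim_\iota \Trop{X}{\iota}$ topologically, every point of $\an{X}$ has a neighborhood basis consisting of such compact $\mathcal{T}^{\mathrm{trop}}$-admissible sets. Now let $(U_i)_{i \in I}$ be a $\mathcal{T}$-admissible covering of a $\mathcal{T}^{\mathrm{trop}}$-admissible set $U$ by $\mathcal{T}^{\mathrm{trop}}$-admissible sets, and fix $x \in U$. Then $x$ lies in a neighborhood $N = U_{i_1} \cup \cdots \cup U_{i_k}$ with $x \in U_{i_1} \cap \cdots \cap U_{i_k}$; shrinking, choose a compact $\mathcal{T}^{\mathrm{trop}}$-admissible neighborhood $\trop_{\iota_0}^{-1}(Z) \subseteq N$ of $x$, and pass to a single toric embedding $\iota'$ dominating $\iota_0$ together with the finitely many embeddings used to present the $U_{i_j}$ near $x$ (such a common refinement exists since the system of toric embeddings is cofiltered). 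Under $\trop_{\iota'}$ --- using, as throughout the paper, that $(\Gamma,\Sigma)$-admissible polyhedra pull back to finite unions of $(\Gamma,\Sigma')$-admissible polyhedra along a refinement, cf.\ the proof of Proposition~\ref{proposition: Theorem 1.3 for distinguished opens} --- the traces of $Z$ and of the $U_{i_j}$ near $x$ become genuine polyhedral and tropical domains in the single space $\Trop{X}{\iota'}$, and the image of the compact neighborhood $\trop_{\iota'}^{-1}(Z)$ is covered there by finitely many of the latter, each containing $\trop_{\iota'}(x)$. But on one fixed tropicalization, a finite covering of a polyhedral domain by polyhedral subdomains is automatically admissible in the tropical $G$-topology --- exactly as a finite covering of an affinoid space by affinoid subdomains is automatically $G$-admissible. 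Pulling this covering back along $\trop_{\iota'}$ shows that $(U_i)$ is refined, in a $\mathcal{T}^{\mathrm{trop}}$-admissible neighborhood of $x$, by a pullback of an admissible covering; as $x$ was arbitrary, $(U_i)$ is $\mathcal{T}^{\mathrm{trop}}$-admissible, which proves (ii).

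I expect (ii) to be the main obstacle. The heart of the matter is converting an a priori infinite, merely locally finite, $\mathcal{T}$-admissible covering into one that near each point descends to a finite covering on a single tropicalization without disturbing the local combinatorics; this genuinely uses properness of $\trop$ (via compactness of polyhedral domains), the cofiltered structure of the system of toric embeddings, and the stability of admissible polyhedra under toric refinement, and getting the bookkeeping of neighborhoods right across the inverse limit will take care. By contrast (i) is comparatively routine once one imports the realization of algebraic rational domains as preimages of polyhedral domains from \cite{analytification, limits}, the only delicate point there being the reduction from analytic to regular defining functions.
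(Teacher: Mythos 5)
Your skeleton matches the paper's: verify the criteria of \cite[Definition~9.1.2.1]{BGR84}, with the ``finer'' direction immediate from the definition of the tropical topology. The substance, however, lies in a single statement that the paper isolates as Lemma~\ref{lem:cover by tropical domains}: every analytic domain admits a $G$-admissible cover by analytic domains that are admissible in the tropical topology. The paper proves this by running the formal-model machinery of \S4 --- for each point of the adic fiber it produces a distinguished formal open $\mathfrak{U}_{P_x}\langle\tfrac{1}{g_x}\rangle$ mapping to a formal model of the affinoid (Lemma~\ref{lem:for proof of Thm 1.3}, which itself invokes Theorem~\ref{thm:adiclimit}), makes its Raynaud fiber tropical-admissible via Proposition~\ref{proposition: Theorem 1.3 for distinguished opens}, and extracts a finite subcover by quasicompactness of the adic fiber. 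Your substitute --- Gerritzen--Grauert plus approximation of analytic defining functions by regular ones --- is a genuinely different route, but the step you call a ``mild subtlety'' is the whole difficulty: a rational subdomain of $V$ is cut out by inequalities among elements of the affinoid algebra, and replacing these by nearby regular functions changes the domain, so you must show that suitably perturbed character inequalities still yield tropical-admissible subsets of $V$ that cover $V$. Neither \cite{analytification} nor \cite{limits} supplies this (their approximation arguments concern points and the topology, not realizations of affinoid subdomains as $\trop^{-1}$ of polyhedra). You also need, and do not provide, a preliminary reduction to affinoids contained in a single chart $\an{U}$, which is not automatic for a strictly affinoid domain in $\an{X}$.

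Your condition (ii) (the paper's criterion (iii)) has a more clear-cut gap. The step where the traces of the $U_{i_j}$ near $x$ ``become genuine polyhedral and tropical domains in the single space $\Trop{X}{\iota'}$'' requires that, over the whole compact set $\trop_{\iota'}^{-1}(Z)$, each $U_{i_j}$ be a finite union of polyhedral-domain preimages drawn from finitely many embeddings. But tropical domains are not open in $\an{X}$: the definition only provides such finite descriptions on neighborhoods \emph{within} $U_{i_j}$, and these do not form an open cover of $\trop_{\iota'}^{-1}(Z)$, so compactness cannot be applied as stated. (This is precisely the difficulty that drives the paper through the formal-model route.) Moreover, the machinery is unnecessary: once the key covering lemma is available, criterion (iii) is formal --- each analytic domain $V_{ij}$ in a finite-union neighborhood of $x$ admits an admissible tropical cover, and the totality of the resulting tropical domains is an admissible tropical refinement of the given cover --- which is exactly how the paper concludes.
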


\noindent The following lemma is a key step in the proof of the proposition.

\begin{lemma}\label{lem:cover by tropical domains}
	Every analytic domain $V\subset \an{X}$ admits an admissible cover by analytic domains that are admissible in the tropical topology.
\end{lemma}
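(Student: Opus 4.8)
The plan is to reduce the statement to the case of a strictly affinoid domain and then invoke the cofinality statement of Theorem~\ref{theorem: cofinality of Gubler models}, which realizes such a domain not just up to refinement, but exactly, as the tropicalization-preimage of the support of a finite polyhedral subcomplex. First I would record the routine reduction: by the definition of analytic domain, each point $x$ of an analytic domain $V\subset\an{X}$ has a neighborhood $W^x_1\cup\cdots\cup W^x_{n_x}\subset V$ with every $W^x_i$ strictly affinoid and $x\in W^x_1\cap\cdots\cap W^x_{n_x}$, so the collection $\{W^x_i\}_{x,\,i}$ is an admissible cover of $V$ in the $G$-topology of $\an{X}$. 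It therefore suffices to show that every strictly affinoid domain $W\subset\an{X}$ is admissible in the tropical topology; then $\{W^x_i\}_{x,\,i}$ is the desired admissible cover of $V$ by analytic domains that are admissible in the tropical topology.

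Next I would fix a strictly affinoid domain $W\subset\an{X}$. It is a compact analytic domain, and it admits an admissible formal model $\mathfrak{W}$ by Raynaud's theory \cite{Bosch14}. Applying Theorem~\ref{theorem: cofinality of Gubler models} to $W$ and $\mathfrak{W}$ produces a toric embedding $\iota\in\Sc$, an extended $(\Gamma,\Sigma)$-admissible polyhedral complex $\Delta$ covering $\Trop{X}{\iota}$, and a finite subcomplex $\Delta'\subset\Delta$ for which the formal Gubler model $\mathfrak{X}_{\Delta'}$ is an admissible formal model of $W$. Since $\mathfrak{X}_{\Delta'}$ is the formal Zariski open of $\mathfrak{X}_{\Delta}$ cut out by the faces of $\Delta'$, its Raynaud fiber is $\trop^{-1}(|\Delta'|)$, as recalled in \S\ref{section: Gubler models of closed subvarieties}; so, as a subset of $\an{X}$, we have $W=\trop_\iota^{-1}\bigl(|\Delta'|\cap\Trop{X}{\iota}\bigr)$.

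To finish, I would observe that $|\Delta'|$ is a finite union of extended $(\Gamma,\Sigma)$-admissible polyhedra, hence $|\Delta'|\cap\Trop{X}{\iota}$ is a finite union of polyhedral domains in $\Trop{X}{\iota}$ and in particular a tropical domain, i.e.\ an admissible open of $\Trop{X}{\iota}_G$. As the tropical topology on $\an{X}$ is by definition the coarsest $G$-topology for which every projection $\an{X}\to\Trop{X}{\iota}_G$ is continuous, the preimage $W=\trop_\iota^{-1}\bigl(|\Delta'|\cap\Trop{X}{\iota}\bigr)$ is an admissible open of the tropical topology. With the first paragraph, this proves the lemma.

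I do not expect a serious obstacle beyond Theorem~\ref{theorem: cofinality of Gubler models} itself, in which all of the substance is already packaged: that theorem says precisely that a compact analytic domain is, exactly and not merely up to refinement, the tropicalization-preimage of the support of a finite subcomplex of a single Gubler-model complex, which is exactly the shape needed here. What remains is the bookkeeping of the last paragraph — confirming that the support of a finite subcomplex meets $\Trop{X}{\iota}$ in a genuine tropical domain, so that its preimage counts as an admissible open for the tropical topology — together with the two routine verifications that an analytic domain has an admissible cover by strictly affinoid domains and that tropicalization maps are continuous for the $G$-topologies; all of these follow directly from the definitions recalled above. By contrast, attempting to prove the lemma directly — say by realizing rational subdomains of $W$ as tropicalization-preimages via condition $(\dagger)$, in the spirit of Corollary~\ref{cor:special-affinoid} and Proposition~\ref{proposition: Theorem 1.3 for distinguished opens} — would require noticeably more work, which routing the argument through Theorem~\ref{theorem: cofinality of Gubler models} sidesteps entirely.
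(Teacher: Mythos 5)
Your argument is correct, but it routes through Theorem~\ref{theorem: cofinality of Gubler models} as a black box, whereas the paper deliberately does not: it instead re-runs the constituent steps of that theorem's proof (Lemma~\ref{lem:for proof of Thm 1.3} and Proposition~\ref{proposition: Theorem 1.3 for distinguished opens}) to manufacture, for each strictly affinoid piece $\mathscr{M}(K\otimes_R A_i)$ of $V$, an explicit admissible tropical cover by the basic sets $\an{\mathfrak{X}_{Q}}=\trop^{-1}(\overline Q)$ for individual faces $Q$ of a subcomplex of a single Gubler model. Your route is shorter and yields a slightly stronger conclusion: each strictly affinoid domain $W$ is not merely tropically coverable but is itself the preimage $\trop_\iota^{-1}(|\Delta'|\cap\Trop{X}{\iota})$ of a single tropical domain, hence a tropical-admissible set on the nose. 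The small points you should make explicit are (a) that the isomorphism $\an{\mathfrak{X}_{\Delta'}}\xrightarrow{\sim}W$ furnished by Theorem~\ref{theorem: cofinality of Gubler models} is the canonical inclusion of subsets of $\an{X}$ (this is what the proof of that theorem constructs, and is what licenses the set-theoretic identity $W=\trop_\iota^{-1}(|\Delta'|\cap\Trop{X}{\iota})$, but it is not literally forced by the bare statement together with Definition~\ref{admissible model of Berkovich space}); and (b) the closedness of extended admissible polyhedra, which is what makes your finite union of polyhedral domains satisfy the local-finiteness-at-each-point condition in the definition of a tropical domain. There is no circularity in invoking Theorem~\ref{theorem: cofinality of Gubler models} here, since its proof depends only on \S\ref{Section-Main Theorem} and not on \S\ref{section: The limit theorem for structure sheaves on Berkovich spaces}. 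The one practical advantage of the paper's version is that it hands the subsequent proof of Proposition~\ref{proposition: G is slightly finer} a cover by individual polyhedral-domain preimages, which is the exact shape needed to verify criterion (iii) there; but your finite subcomplex $\Delta'$ decomposes $W$ into such basic pieces anyway, so nothing downstream is lost.
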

\begin{proof}
	Choose an admissible cover $V=\bigcup\mathscr{M}(K\otimes_{R}A_{i})$ by strictly affinoid domains, where each $A_{i}$ is an admissible $R$-algebra. It suffices to prove that each $\mathscr{M}(K\otimes_{R}A_{i})$ has an admissible tropical cover. We now use the same strategy that we used in the proof of Theorem  \ref{theorem: cofinality of Gubler models}.
	
	Let $\mathfrak{V} = \mathrm{Spf} A_i$. For each $x\in \mathscr{M}(K\otimes_{R}A_{i})^{\text{ad}}$, construct $\mathfrak{U}_{P_{x}}\langle\tfrac{1}{g_{x}}\rangle\lra\mathfrak{V}$ as in the statement of Lemma \ref{lem:for proof of Thm 1.3}, using the inverse system $\mathcal{S}$ of all closed embeddings of $X$ into toric varieties. Recall that $\mathfrak{U}_{P_{x}}$ is a formal polyhedral domain in a Gubler model $\mathfrak{X}_{\Delta_{x}}$ of $X$. By Proposition \ref{proposition: Theorem 1.3 for distinguished opens}, we can find a formal Gubler model $\mathfrak{X}_{\Delta'_{x}}$ of $X$ that is adapted to $\an{\mathfrak{U}_{P_{x}} \big \langle \frac{1}{g_{x}} \big \rangle}$ and dominates $\mathfrak{X}_{\Delta_{x}}$. Since $\mathscr{M}(K\otimes_{R}A_{i})^{\text{ad}}$ is quasicompact, we can choose a finite collection of points $x\in\mathscr{M}(K\otimes_{R}A_{i})$ such that the adic open subspaces $\ad{\mathfrak{U}_{P_{x}} \big \langle \frac{1}{g_{x}} \big \rangle}$ cover $\mathscr{M}(K\otimes_{R}A_{i})^{\text{ad}}$, and thus the analytic domains $\an{\mathfrak{U}_{P_{x}} \big \langle \frac{1}{g_{x}} \big \rangle}$ cover $\mathscr{M}(K\otimes_{R}A_{i})^{\text{an}}$. Choose a single Gubler model $\mathfrak{X}_{\Upsilon}$ dominating the models $\mathfrak{X}_{\Delta'_{x}}$ for this finite collection of points $x\in\mathscr{M}(K\otimes_{R}A_{i})^{\text{ad}}$. Let $\Upsilon'\subset\Upsilon$ be the subcomplex consisting of all polyhedra $Q\in\Upsilon$ such that $\mathfrak{X}^{\text{an}}_{Q}$ lies in one of the analytic domains $\an{\mathfrak{U}_{P_{x}} \big \langle \frac{1}{g_{x}} \big \rangle}$. Then the collection $\big\{\mathfrak{X}_{Q}^{\text{an}}:Q\in\Upsilon'\big\}$ is an admissible tropical cover of $\mathscr{M}(K\otimes_{R}A_{i})$.
\end{proof}

\begin{proof}[Proof of Proposition~\ref{proposition: G is slightly finer}]
	We need to check criteria (i) through (iii) of \cite[Definition~9.1.2.1]{BGR84}.
	To see that (i) holds, observe that every admissible open in the tropical topology is a strictly affinoid and hence an analytic domain. Since every tropical admissible cover is $G$-admissible, this implies that the $G$-topology is finer than the tropical topology on $\an{X}$. Criterion (ii) follows immediately from Lemma \ref{lem:cover by tropical domains}. To verify (iii), we need to show that any $G$-covering of a tropical domain has a tropical refinement. Let $V$ be a tropical domain, with $\{V_i \}_{i \in I}$ a $G$-admissible cover by analytic domains.  Then each point $x \in V$ has a neighborhood in $V$ which is a finite union of analytic domains $V_{i1} \cup \cdots \cup V_{ir}$, where each $V_{ij}$ contains $x$. By Lemma \ref{lem:cover by tropical domains}, each of the analytic domains $V_{ij}$ admits an admissible tropical cover. This shows that every point in $V$ has a neighborhood which is a finite union of tropical domains, each contained in one of the analytic domains $V_i$.  All of these tropical domains together form an admissible tropical cover of $V$ that refines the $G$-admissible cover $\{V_i\}$, as required.
\end{proof}

Sheaves in a Grothendieck topology extend uniquely to sheaves in slightly finer Grothendieck topologies \cite[Proposition~9.2.3.1]{BGR84}, so sheaves on $\an{X}_G$ are determined by their values on tropical domains and the restriction maps between them.

There is a natural structure sheaf $\mathscr{O}^\mathrm{trop}$ in the tropical topology on $\an{X}$, which records the structure sheaves of all formal Gubler models associated to admissible subdivisions of $\Trop{X}{\iota}$, as follows: Let $\overline{P}$ be the closure of a $\Gamma$-admissible polyhedron with recession cone in $\Sigma$, and let $V \subset \an{X}$ be the preimage of $\overline{P}$ under the tropicalization map $X^\text{an}\to\Trop{X}{\iota}$. By \S\ref{section: Gubler models of closed subvarieties}, we have $V=\mathfrak{X}^\text{an}_{P}$.  Then we define
	\[
	\mathscr{O}^\mathrm{trop}(V)
	\ :=\ 
	\text{coordinate ring of }\mathfrak{X}_{P}.
	\]
Note that the global analytic functions on $V$ are $K[V] = K\otimes_{R}\mathscr{O}^\mathrm{trop}(U)$.

\begin{corollary}\label{corollary: recovering analytic sheaf from tropical sheaf}
The analytic structure sheaf $\mathscr{O}_{\an{X}_G}$ is the unique sheaf in the $G$-topology that extends the sheaf $K \otimes_R \mathscr{O}^\mathrm{trop}$ in the tropical topology.
\hfill
$\square$
\end{corollary}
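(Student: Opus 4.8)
The plan is to deduce the corollary from Proposition~\ref{proposition: G is slightly finer} together with the fact that a sheaf on a Grothendieck topology extends uniquely to any slightly finer Grothendieck topology (\cite[Proposition~9.2.3.1]{BGR84}). Concretely, I would argue in three steps: (1) the restriction of $\mathscr{O}_{\an{X}_G}$ to the tropical topology is again a sheaf; (2) on the basic tropical admissible opens $\trop^{-1}(\overline P)$ this restriction agrees, with its restriction maps, with $K\otimes_R\mathscr{O}^{\mathrm{trop}}$, so the two sheaves in the tropical topology coincide (and in particular $K\otimes_R\mathscr{O}^{\mathrm{trop}}$ really is a sheaf); (3) invoke uniqueness of the extension.

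For step (1), I would use criterion (i) of \cite[Definition~9.1.2.1]{BGR84}, already verified in the proof of Proposition~\ref{proposition: G is slightly finer}: every tropical admissible open is an analytic domain and every admissible tropical cover is $G$-admissible. Hence the sheaf axiom for tropical covers is a special case of the sheaf axiom for $G$-covers, so the restriction of $\mathscr{O}_{\an{X}_G}$ to tropical domains and tropical covers is a sheaf in the tropical topology; moreover, just as a sheaf on $\Trop{X}{\iota}_G$ is determined by its values on polyhedral domains, a sheaf in the tropical topology on $\an{X}$ is determined by its values on the preimages $\trop^{-1}(\overline P)$, which generate that topology. For step (2), I would fix an extended $(\Gamma,\Sigma)$-admissible polyhedron $\overline P$ with recession cone in $\Sigma$ and set $V=\trop^{-1}(\overline P)$. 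By the construction of Gubler models in \S\ref{section: Gubler models of closed subvarieties}, $V$ is the Raynaud fiber $\an{\mathfrak{X}_P}$ of the admissible formal $R$-scheme $\mathfrak{X}_P=\Spf A$, where $A$ is the formal completion of the affine $R$-algebra cutting out $\mathscr{X}_P$, and $A$ is by definition the coordinate ring of $\mathfrak{X}_P$. Since $V$ is strictly affinoid with affinoid algebra $K\otimes_R A$, we get $\mathscr{O}_{\an{X}_G}(V)=K\otimes_R A=K\otimes_R\mathscr{O}^{\mathrm{trop}}(V)$, and on both sides the restriction maps attached to an inclusion of faces $\overline Q\subset\overline P$ are the canonical ones induced by the open immersion $\mathfrak{X}_Q\subset\mathfrak{X}_P$. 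Thus the two sheaves have identical values and restriction maps on the generating family $\{\trop^{-1}(\overline P)\}$, hence agree.

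For step (3), Proposition~\ref{proposition: G is slightly finer} says the $G$-topology on $\an{X}$ is slightly finer than the tropical topology, so \cite[Proposition~9.2.3.1]{BGR84} furnishes a unique sheaf on $\an{X}_G$ extending $K\otimes_R\mathscr{O}^{\mathrm{trop}}$; by step (2), $\mathscr{O}_{\an{X}_G}$ is such an extension, hence the unique one. I expect the only real work to be the bookkeeping in steps (1)--(2): keeping straight the notions of admissible open and admissible cover across $\Trop{X}{\iota}_G$, the induced tropical topology on $\an{X}$, and $\an{X}_G$, and checking that the preimages of polyhedral domains genuinely form a family that detects equality of sheaves in the tropical topology. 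The identification of sheaf values in step (2) is then immediate from the Gubler-model construction and Raynaud's generic-fiber functor.
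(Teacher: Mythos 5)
Your proposal is correct and follows essentially the same route as the paper: the corollary is deduced from Proposition~\ref{proposition: G is slightly finer} together with the unique-extension statement \cite[Proposition~9.2.3.1]{BGR84}, after observing that on each $V=\trop^{-1}(\overline P)$ the affinoid algebra of the strictly affinoid domain $V=\an{\mathfrak{X}_P}$ is $K\otimes_R(\text{coordinate ring of }\mathfrak{X}_P)=K\otimes_R\mathscr{O}^{\mathrm{trop}}(V)$. The paper leaves this as immediate (hence the bare $\square$), and your steps (1)--(3) are just a careful write-up of that same argument.
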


\begin{remark}
	Fix a pair of $K$-varieties $X$ and $Y$. Because morphisms $\an{X}\longrightarrow\an{Y}$ are exactly morphisms of ringed spaces that are locally dual to bounded morphisms of strictly affinoid $K$-algebras in the $G$-topologies on $\an{X}$ and $\an{Y}$,  Proposition \ref{proposition: G is slightly finer} and its Corollary \ref{corollary: recovering analytic sheaf from tropical sheaf} imply that $\text{Hom}_{K}(\an{X},\an{Y})$ is determined locally in the tropical topologies on $X$ and $Y$. In this sense, we can recover the entire category of analytifications using the tropical topologies on analytifications.
\end{remark}

\begin{proof}[{\it Proof of Theorem \ref{thm:berklimit}.}]
	Let $X^{\text{an}}_{\text{trop}}$ denote $\an{X}$ equipped with its tropical topology. By \cite[Proposition~9.2.3.1]{BGR84}, restriction to the tropical topology induces an equivalence of topoi
	\begin{equation}\label{equation: equivalence of topoi}
	\text{Sh}(X^{\text{an}}_{G})\xrightarrow{\ \sim\ }\text{Sh}(X^{\text{an}}_{\text{trop}}).
	\end{equation}
The sheaf $\mathscr{O}_{\an{X}_{G}}$ is a ring object in the topos $\text{Sh}(\an{X}_{G})$, and the sheaf $\mathscr{O}^{\text{trop}}$ is a ring object in the topos $\text{Sh}(X^{\text{an}}_{\text{trop}})$. By definition, each gives its respective topos the structure of a ringed topos. By Corollary \ref{corollary: recovering analytic sheaf from tropical sheaf}, the equivalence \eqref{equation: equivalence of topoi} takes
	$$
	\mathscr{O}_{\an{X}_{G}}
	\longmapsto
	K \otimes_R \mathscr{O}^{\text{trop}},
	$$
and is thus an equivalence of ringed topoi.
	
	The fact that the functor \eqref{equation: equivalence of topoi} is restriction to a slightly coarser topology implies that at each point $x\in\an{X}$, we have an isomorphism of stalks
	$$
	\mathscr{O}_{\an{X}_{G},x}\ \cong\ \mathscr{O}^{\text{trop}}_{x}.
	$$
Because $\mathscr{O}_{\an{X}_{G}}$ is a sheaf of local rings, this makes \eqref{equation: equivalence of topoi} an equivalence of locally ringed topoi.
\end{proof}


\bibliographystyle{amsalpha}
\bibliography{math}

\end{document}